\newtheorem{theorem}{Theorem}[section]
\newtheorem{proposition}[theorem]{Proposition}
\newtheorem{lemma}[theorem]{Lemma}
\newtheorem{corollary}[theorem]{Corollary}
\newtheorem{fact}[theorem]{Fact}
\theoremstyle{definition}
\newtheorem{definition}[theorem]{Definition}
\newtheorem{remark}[theorem]{Remark}
\numberwithin{equation}{section}
\numberwithin{equation}{section}
\let\save@mathaccent\mathaccent
\newcommand*\if@single[3]{%
\setbox0\hbox{${\mathaccent"0362{#1}}^H$}%
\setbox2\hbox{${\mathaccent"0362{\kern0pt#1}}^H$}%
\ifdim\ht0=\ht2 #3\else #2\fi
}
\newcommand*{\transpose}{%
{\mathpalette\@transpose{}}%
}
\newcommand*{\@transpose}[2]{%
% #1: math style
% #2: unused
\raisebox{\depth}{$\m@th#1\intercal$}%
}
\def\pgfutil@Repeat#1#2{#2\ifnum#1>0
  \expandafter\pgfutil@firstofone\else\expandafter\pgfutil@gobble\fi
  {\expandafter\pgfutil@Repeat\expandafter{\the\numexpr#1-1\relax}{#2}}}
\tikzset{
  dash between/.code args={#1 and #2}{%
    \tikz@addoption{%
      \pgfgetpath\currentpath
      \pgfprocessround{\currentpath}{\currentpath}%
      \pgf@decorate@parsesoftpath{\currentpath}{\currentpath}%
      \pgfmathsetlengthmacro\firstpart{(#1)*\pgf@decorate@totalpathlength}%
      \pgfmathsetlengthmacro\secondpart{(#2-(#1))*\pgf@decorate@totalpathlength}%
      \pgfmathsetlengthmacro\thirdpart{(1-(#2))*\pgf@decorate@totalpathlength}%
      \edef\thirdpart{{\thirdpart}{0pt}}%
      \edef\firstpart{{\firstpart}{0pt}}%
      \pgfmathsetlengthmacro\secondpartlength{\pgfkeysvalueof{/tikz/dash between on}
                                            +(\pgfkeysvalueof{/tikz/dash between off})}%
      \pgfmathtruncatemacro\repetitions{\secondpart/\secondpartlength}%
      \pgfmathsetlengthmacro\secondexpand{\secondpart/\repetitions-\secondpartlength}%
      \edef\secondexpand{\the\dimexpr\pgfkeysvalueof{/tikz/dash between off}+\secondexpand\relax}%
      \edef\secondpart{%
        \pgfutil@Repeat{\the\numexpr\repetitions-1\relax}%
          {{\pgfkeysvalueof{/tikz/dash between on}}{\secondexpand}}%
      }%
      \edef\tikz@temp{\firstpart\secondpart\thirdpart}%
      \expandafter\pgfsetdash\expandafter{\tikz@temp}{+0pt}%
    }
  }
}
\tikzset{
  dash between style/.is choice,
  dash between style/dotted/.style        ={dash between on=\pgflinewidth,dash between off=2pt},
  dash between style/densely dotted/.style={dash between on=\pgflinewidth,dash between off=1pt},
  dash between style/loosely dotted/.style={dash between on=\pgflinewidth,dash between off=4pt},
  dash between style/dashed/.style        ={dash between on=3pt,dash between off=2pt},
  dash between style/loosely dashed/.style={dash between on=3pt,dash between off=6pt},
  dash between style/densely dashed/.style={dash between on=3pt,dash between off=2pt},
  dash between style/no/.style={dash between on=0pt, dash between off=1pt},% dirty
  dash between on/.initial=\pgflinewidth,
  dash between off/.initial=2pt,
  middle dotted line/.style={
    thick,
    dash between=.35 and .65}}
\newcommand\QQ{\mathbb{Q}}
\newcommand\CC{\mathbb{C}}
\newcommand\ZZ{\mathbb{Z}}
\newcommand\FF{\mathbb{F}}
\DeclareMathOperator{\homo}{Hom}
\DeclareMathOperator{\et}{\acute{e}t}
\def\holim{\qopname\relax m{holim}}
\newcommand{\Addresses}{{% additional braces for segregating \footnotesize
  \bigskip
  \footnotesize
  Ruida Di, \textsc{Morningside Center of Mathematics and Chinese Academy of Sciences, 55 Zhongguancun E Rd, Beijing 100190, China.}\par\nopagebreak
  \textit{E-mail address}, \texttt{drdmath@amss.ac.cn}

  Runjie Hu, \textsc{Department of Mathematics, 3368 Texas A\&M University, College Sta, TX 77843}\par\nopagebreak
  \textit{E-mail address}, \texttt{ runjie.hu@tamu.edu}

  Siqing Zhang, \textsc{Department of Mathematics, Yale University, 219 Prospect St, New Haven, CT 06511}\par\nopagebreak
  \textit{E-mail address}, \texttt{siqing.zhang@yale.edu}
}}
\author{Ruida Di, Runjie Hu, Siqing Zhang}
\title{A necessary condition for liftings of positive characteristic varieties with finite fundamental groups}
\begin{document}

\maketitle

\begin{abstract}
In this paper, we introduce a necessary condition for the existence of characteristic zero liftings of certain smooth, proper varieties in positive characteristic, using étale homotopy theory and Wall's finiteness obstruction. For a variety with finite étale fundamental group $\pi$, we define a notion of mod-$l$ finite dominatedness based on the $\FF_l$-chain complex of the universal cover of its $l$-profinite étale homotopy type. We prove that such a variety $X$ can be lifted to characteristic zero only if the above chain complex of $X$ is quasi-isomorphic to a bounded complex of finitely generated projective $\FF_l [\pi]$-modules. To prove this result, we extend Wall's discussions of finiteness obstructions to $l$-profinite complete spaces with finite fundamental group.
\end{abstract}

\section{Introduction}

Some characteristic $p>0$ varieties fail to lift to characteristic zero. Known obstructions include the failure of Kodaira vanishing and certain behavior of the fundamental group \cite{esnault2021obstruction, haboush1993varieties, totaro2019failure}. In this paper, we introduce a new necessary condition to such liftings, using étale homotopy theory and a variant of Wall’s finiteness obstruction. It is based on four observations:
\begin{enumerate}
    \item If a smooth proper variety $X$ over an algebraically closed field $k$ of characteristic $p>0$ admits a characteristic zero lifting $\widetilde{X}$ over a discrete valuation ring $R$, then the $l$-adic \'etale homotopy type $X^{\wedge}_{\et,l}$ is homotopy equivalent to $(\widetilde{X}_K)_{\et,l}$, where $K$ is the algebraic closure of the fraction field of $R$ for any $l\neq p$.
    \item For a complex variety $X$, its $l$-adic \'etale homotopy type $X^{\wedge}_{\et,l}$ is homotopy equivalent to its $l$-profinite completion $\widehat{X}_l$.
    \item Any proper complex variety is a finite CW complex.
    \item A CW complex with fundamental group $\pi$ is homotopy equivalent to a finite CW complex if and only if it is finitely dominated and the Wall finiteness obstruction in the reduced algebraic $K$-group $\widetilde{K}_0(\ZZ [\pi])$ vanishes.
\end{enumerate}

We introduce the terminologies and our results below.

\underline{\textit{$l$-profinite completion}.}

Let $l$ be a prime number. A $\pi$-($l$-)finite space is a space whose homotopy groups are finite ($l$-)groups.
Given a pro-space $X$, its $l$-profinite completion $X_l$ is a pro-$\pi$-$l$-finite-space which is initial among such pro-spaces that receives a map from $X$.
Recall that $\pi_1(X_l)$ is the $l$-profinite completion of $\pi_1(X)$.
If $X$ is a finite CW complex and simply-connected, then  $\pi_i(X_l)$ is the $l$-completion of $\pi_i(X)$. Two pro-spaces are said to be $l$-adic weak equivalent if the homotopy limits of their $l$-profinite completions are homotopy equivalent.

\underline{\textit{\'Etale homotopy type.}}

The étale homotopy type $X_{\et}$ of a scheme $X$ is a pro-$\pi$-finite space that generalizes  $\pi_1^{\et}(X)$ and $H^i_{\et}(X)$.
For complex varieties, the pro-space $X_{\et}$
  is weak equivalent to the profinite completion of the analytic space $X(\CC)^{an}$.
Coupled with the specialization isomorphism, it follows that if a smooth proper variety $X$ in characteristic $p$ lifts to characteristic zero, then its étale homotopy type $X_{\et}$ must be $l$-adic homotopy equivalent to a finite CW complex.

\underline{\textit{Wall finiteness obstruction.}}

Wall \cite{Wall-Finiteness-I,Wall-Finiteness-II} shows that a CW complex $Y$ is homotopy equivalent to a finite CW complex if and only if it is finitely dominated and the so-called Wall finiteness obstruction $\chi(Y)\in \widetilde{K}_0(\ZZ[\pi_1(Y)])$ vanishes. 

\underline{\textit{Finiteness obstruction for local spaces.}}

Finiteness obstruction for fiberwise $T$-local spaces with $T$ a set of primes was partially studied in \cite{local-finiteness}. We make a more geometric definition for $T$-local finite dominatedness (see Definition \ref{Def: local finite dominatedness}). That is, a fiberwise $T$-local CW complex $Y$ is $T$-local finitely dominated if $Y$ is the homotopy retract of the fiberwise $T$-localization of a finite CW complex. Then we prove that this geometric definition is equivalent to the notion of ``T-type FP'' in \cite{local-finiteness}*{Definition 3.2} (see Proposition \ref{Prop: Alternative Definition of Finite Dominatedness}),  which should be thought of as a chain complex definition of $T$-local finite dominatedness. 

Then one may define $T$-local Wall finiteness obstruction $\chi_T(Y)$ in the reduced algebraic $K$-group $\widetilde{K}_0(\ZZ_{(T)}[\pi_1])$ for any $T$-local finitely dominated space $Y$  (\cite{local-finiteness}*{Definition 3.3} or see Definition \ref{Def: local Wall obstruction}). We prove the following.

\begin{theorem}
\begin{enumerate}
    \item (Theorem \ref{Thm: Local Finiteness}) Let $Y$ be a connected, fiberwise $T$-local CW complex. Then $Y$ is fiberwise $T$-local homotopy equivalent to a finite CW complex if and only if $Y$ is $T$-local finitely dominated and the $T$-local Wall finiteness obstruction $\chi_{T}(Y)\in \widetilde{K}_0(\ZZ_{(T)}[\pi])$ vanishes.
    \item (Theorem \ref{Thm: Realization of local finiteness obstruction}) For any element $\sigma\in\widetilde{K}_0(\ZZ_{(T)}[\pi])$ with $\pi$ finitely presented, there exists a fiberwise $T$-local, $T$-local finitely dominated CW complex $Y$ with fundamental group $\pi$ such that $\chi_{T}(Y)=\sigma$.
\end{enumerate}
\end{theorem}

The item (1) is stated without proof in \cite{local-finiteness}*{Proposition 3.2}.
The item (2) is new.

\underline{\textit{Finiteness obstruction for complete spaces with finite fundamental group.}}

We make clear the notion of mod-$l$-finite dominatedness for any $l$-profinite complete space with finite fundamental group (see Definition \ref{Def: mod-l finitely dominated}). A connected CW complex $Y$ with $\pi=\pi_1(Y)^{\wedge}_l$ a finite $l$-group is mod-$l$ finitely dominated if the mod-$l$ chain complex of the universal cover of the $l$-profinite completion of $Y$ is quasi-isomorphic to a bounded chain of finitely generated projective modules over $\FF_l [\pi]$.

Note that in this setting, the Wall obstruction vanishes automatically, because a finite $l$-group $\pi$ has the trivial reduced $K$-group $\widetilde{K_0}(\FF_l[\pi])$.

Even for the simply-connected case, an $l$-profinite complete ``finite type'' space might not be $l$-adic weak equivalent to a finite CW complex (see \cite{Victor-Wilkerson-Counterexample-p-completion}*{Lemma 3.4 and the paragraph above
it} for an example). A requirement is that the space needs to be the $l$-profinite completion of some $l$-local space. This suggests the definition of $l$-local liftability for $l$-profinite complete spaces with finite fundamental groups (see Definition \ref{Def: l-local lifting}). Then the finiteness obstruction for the $l$-profinite complete case is the following:

\begin{theorem}[see Proposition \ref{Lemma: a necessary condition for finiteness obstruction}, Theorem \ref{Thm: Main Theorem 1}]\label{Thm: Main Intro}
Let $Y$ be a connected CW complex with $\pi=\pi_1(Y)^{\wedge}_l$ finite.
\begin{enumerate}
    \item If $Y$ is $l$-adic weak equivalent to a finite CW complex, then it is mod-$l$ finitely dominated.
    \item $Y$ is $l$-adic weak equivalent to a finite CW complex with the same fundamental group if and only if $Y$ is mod-$l$ finitely dominated and its $l$-profinite completion $Y_l$ is $l$-local liftable.
\end{enumerate}
\end{theorem}

We should emphasize that the mod-$l$ finite dominatedness is more subtle than only requiring the homology to be finite dimensional. One example is that a free resolution of $\FF_l$ as $\FF_l[\ZZ/l]$-modules is not perfect, in the sense that it is not chain homotopy equivalent to a bounded chain complex of finitely generated projective $\FF_l[\ZZ/l]$-modules, though its homology is $\FF_l$ at degree $0$.

\underline{\textit{Applications.}} 

To apply the previous topological discussion to algebraic geometry, we first prove the following of homotopical finiteness for schemes with finite \'etale cohomology and finite \'etale fundamental groups.

\begin{theorem}[see Corollary \ref{cor: finite gen piq}]\label{Thm: Main Intro II}
Let $X$ be a connected, pointed, locally Noetherian scheme. Assume that $\varprojlim (\pi^{\et}_1(X))^{\wedge}_l$ is a finite group, where $(\pi^{\et}_1(X))^{\wedge}_l$ is the pro-group of the $l$-profinite completion of the \'etale fundamental (pro-)group $\pi^{\et}_1(X)$. Assume that $H^{q}_{\et}(X;A)$ is finite for any $q$ and for any locally constant sheaf $A$ of abelian finite $l$-groups, then the homotopy group $\pi_q(X^{\wedge}_{\et,l})$ of the $l$-profinite completion $X^{\wedge}_{\et,l}$ of its \'etale homotopy type $X_{\et}$ is a finitely generated $\widehat{\ZZ}_l$-module for any $q\geq 2$.
\end{theorem}

Mod-$l$-finite dominatedness can be analogously defined for pro-spaces with finite fundamental group (see Definition \ref{defn: pro mod l finite dom}).
Then we define a scheme $X$ with  $(\pi_1^{\et}(X))^{\wedge}_l$ finite to be mod-$l$ finitely dominated if the $l$-profinite completion of its étale homotopy type is. The following is a direct corollary of Theorems \ref{Thm: Main Intro} and \ref{Thm: Main Intro II},

\begin{theorem}[see Theorem \ref{Cor: Cor 1}, Corollary \ref{cor: last cor}] 
Let $X$ be a connected, pointed, locally Noetherian scheme with $\pi=(\pi^{\et}_1(X))^{\wedge}_l$ finite.
\begin{enumerate}
    \item $X_{\et}$ is $l$-adic weak equivalent to a complex variety with fundamental group $\pi$ if and only if $X$ is mod-$l$ finitely dominated and $X^{\wedge}_{\et,l}$ is $l$-local liftable.
    \item If $X$ is smooth, proper over a separably closed field of characteristic $p>0$ with $l\neq p$, then a necessary condition for $X$ to lift to zero characteristic is that $X$ is mod-$l$ finitely dominated.
\end{enumerate}
\end{theorem}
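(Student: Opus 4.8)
The plan is to derive both parts from Theorem~\ref{Thm: Main Theorem 1} together with the classical comparison between étale homotopy types and complex-analytic spaces. Write $\pi := (\pi_1^{\et}(X))^{\wedge}_l$; it is finite by hypothesis, and being an $l$-profinite completion it is a finite $l$-group, so $\FF_l[\pi]$ is local and $\widetilde{K_0}(\FF_l[\pi]) = 0$ --- which is why \emph{mod-$l$ finite dominatedness}, with no additional $K$-theoretic term, is the correct condition. Since $\pi$ is finite, no $\lim^1$ obstruction intervenes and $Y := \holim(X_{\et})_l$ is a connected pointed CW complex with $\pi_1(Y) = \pi$; moreover, by the respective definitions (Definition~\ref{defn: pro mod l finite dom}, using that $Y$ is already $l$-complete), the relation ``$X_{\et}$ is $l$-adic weak equivalent to $(-)$'' and the property ``$X$ is mod-$l$ finitely dominated'' are both properties of the single space $Y$. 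Thus $X$ is mod-$l$ finitely dominated if and only if $Y$ is, which by Theorem~\ref{Thm: Main Theorem 1} holds if and only if $Y$ is $l$-adic weak equivalent to a finite CW complex. It therefore suffices to establish: $Y$ is $l$-adic weak equivalent to a finite CW complex if and only if $X_{\et}$ is $l$-adic weak equivalent to the étale homotopy type $V_{\et}$ of a complex variety $V$.

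The direction ``complex variety $\Rightarrow$ finite CW complex'' is standard input: any finite-type $\CC$-variety $V$ is triangulable ({\L}ojasiewicz, Hironaka), so $V^{an}(\CC)$ is a finite CW complex, and by the Artin--Mazur comparison $V_{\et}$ is weakly equivalent to the profinite completion of $V^{an}(\CC)$. Hence if $X_{\et} \simeq_l V_{\et}$ then $Y$ is $l$-adic weak equivalent to the finite CW complex $V^{an}(\CC)$ (whose $l$-completed fundamental group is accordingly $\pi$), so by Theorem~\ref{Thm: Main Theorem 1} $Y$ --- and with it $X$ --- is mod-$l$ finitely dominated. This proves the forward half of the equivalence; it also proves the liftability assertion once a complex variety $l$-adically equivalent to the special fiber of a lift is produced, which I do at the end.

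For the converse, suppose $X$ is mod-$l$ finitely dominated. By Theorem~\ref{Thm: Main Theorem 1} there is a finite CW complex $K$ with $X_{\et} \simeq_l K$, and in that construction $K$ may be taken with $\pi_1(K) = \pi$ finite, so that its universal cover $\widetilde K$ is a finite simply connected complex carrying a free $\pi$-action with $\widetilde K / \pi = K$. It remains to realize $K$, up to $l$-adic weak equivalence, by a complex variety; I would do this $\pi$-equivariantly. Embed the finite free $\pi$-CW complex $\widetilde K$ into a linear representation sphere of $\pi$ and pass to a smooth equivariant regular neighborhood to obtain a compact smooth $\pi$-manifold $\widetilde M \simeq_{\pi} \widetilde K$ with free $\pi$-action; realize $\widetilde M$ (or its interior) as the real locus of a nonsingular real algebraic $\pi$-variety whose real locus is a deformation retract of its complex locus; then complexify to obtain a nonsingular complex $\pi$-variety $\widetilde V$ with $\widetilde V^{an}(\CC) \simeq_{\pi} \widetilde K$, and set $V := \widetilde V / \pi$. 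Then $V$ is a complex variety with $V^{an}(\CC) \simeq K$, hence $V_{\et} \simeq_l K \simeq_l X_{\et}$, as needed. (An alternative is to realize $K$ up to $l$-adic weak equivalence by modifying a smooth projective variety of large dimension with fundamental group $\pi$ and highly connected universal cover, obtained by the standard Lefschetz/complete-intersection construction applied inside the complement of the fixed locus of a faithful linear $\pi$-action.)

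Finally, the liftability corollary. If $X$ is smooth and proper over a field $k$ of characteristic $p>0$ and admits a lift to characteristic zero, then (algebraizing a formal lift via Grothendieck existence if necessary) there is a proper flat scheme $\mathcal X$ over a complete discrete valuation ring $R$ of residue field $k$ and fraction field of characteristic $0$, with special fiber $X$ and smooth generic fiber; base-changing the generic fiber along a suitable embedding into $\CC$ yields a smooth proper complex variety $\mathcal X_{\CC}$. Smooth and proper base change for étale homotopy types --- properness to identify the geometric special fiber $l$-adically with the geometric generic fiber, together with invariance of the étale homotopy type under extension of algebraically closed base fields --- gives $X_{\et} \simeq_l (\mathcal X_{\bar\eta})_{\et} \simeq_l (\mathcal X_{\CC})_{\et}$, so $X_{\et}$ is $l$-adic weak equivalent to the complex variety $\mathcal X_{\CC}$; the forward half above then shows $X$ is mod-$l$ finitely dominated. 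The skeleton of the whole argument is routine given Theorem~\ref{Thm: Main Theorem 1}. The points needing care are (i) invoking the comparison and base-change inputs at the level of $l$-profinite \emph{homotopy types}, not merely of $\FF_l$-cohomology, and --- the genuine obstacle --- (ii) the equivariant algebraization in the converse direction, i.e.\ producing an honest complex variety (not merely a pro-$\pi$-$l$-space) whose $l$-profinite étale homotopy type matches $K$ compatibly with the finite group $\pi$; this is where essentially all the work beyond Theorem~\ref{Thm: Main Theorem 1} is concentrated.
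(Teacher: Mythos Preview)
Your overall strategy matches the paper's: reduce to Theorem~\ref{Thm: Main Theorem 1} by passing to $Y=\holim(X_{\et})^{\wedge}_l$, use triangulability of complex varieties together with Artin--Mazur comparison for the forward direction, and invoke specialization for the liftability corollary. The paper routes the reduction through the pro-version (Theorem~\ref{Thm: Main Theorem 2} and the proposition preceding it), but your inline argument that both sides are properties of the single space $Y$ amounts to the same thing.

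The genuine gap is in your converse step, realizing a finite CW complex $K$ by a complex variety. You propose an equivariant Nash--Tognoli maneuver: produce a nonsingular real algebraic $\pi$-variety $W$ with $W(\RR)\simeq_\pi \widetilde K$, then pass to $W(\CC)/\pi$. For this to give $V^{an}(\CC)\simeq K$ you need $W(\RR)\hookrightarrow W(\CC)$ to be a ($\pi$-equivariant) homotopy equivalence, and you assert this can be arranged but give no mechanism. It is false for generic smooth real varieties---already a smooth real cubic curve has $W(\RR)$ a union of circles while $W(\CC)$ is a torus---so some special construction is required, and the equivariant refinement only adds difficulty. Your alternative (``modify a smooth projective variety with fundamental group $\pi$'') is too vague to evaluate. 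The paper sidesteps all of this by citing the construction in Deligne--Sullivan \cite{Deligne-Sullivan} (paragraph below the Lemma on p.~1082), which directly shows that any finite CW complex is homotopy equivalent to a complex variety; you should invoke that result rather than attempt to reprove it. One minor omission: in the liftability paragraph you should restrict to $l\neq p$, since the Artin--Mazur specialization equivalence \cite{artin-mazur-etale-homotopy}*{Cor.~12.12, 12.13} requires it.
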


Whether this obstruction of ``mod-$l$ finitely dominated'' is nontrivial in concrete cases remains an open question.

In the appendix, we provide an algebraic proof of a representation theory result (Corollary \ref{Cor:: freeness of modules}), which is alternatively indicated by topological discussions of $l$-complete Wall finiteness obstructions (see Remark \ref{Rem: vanishing of k-group}).  \\

\noindent\textbf{Acknowledgments.} 

We would like to thank Irina Bobkova, James F. Davis, Guozhen Wang, Xingting Wang, Sarah Witherspoon and Zhizhang Xie for useful conversations and comments. 
 R. H. is supported by
NSF Grant 2247322.
S. Z. is partially supported by the National Science Foundation under Grant No. DMS-1926686, and an AMS-Simons travel grant.
We are grateful to the referee who provides many useful comments in this manuscript.

\subsubsection*{Terminologies and Notations}

\begin{itemize}[leftmargin=0.25in]
    \item For a CW complex $Z$, the space $Z_l$ is Sullivan's $l$-profinite completion \cite{Sullivan-adams-conjecture}. 
    For a simply-connected and finite type $Z$, Bousfield's $l$-completion \cite{Bousfield-localization-of-spaces}, Bousfield-Kan's $\ZZ/l$-localization \cite{Bousfield-Kan} and Sullivan's $l$-profinite completion are equivalent, see \cite[\S2.3]{Barthel-Bousfield-Comparison-of-p-completion}.
    \item A pro-space is an object in the pro category of the homotopy category of pointed, connected CW complexes. A map of pro-spaces is a morphism in the pro category.
    \item For a pro-space $Y$, $Y^{\wedge}_l$ is the pro-space by Artin-Mazur's $l$-profinite completion \cite{artin-mazur-etale-homotopy}.
    \item For a (pro-)group $G$ (such as $\pi^{\et}_1(X)$), $G^{\wedge}_l$ is the \textit{pro-group} of the $l$-profinite completion of $G$. For an abelian group $A$, $A_l$ is the $l$-completion of $A$, which is the inverse limit of $A/l^nA$ for all $n$. If $A$ is a finitely generated abelian group, $A^{\wedge}_l$ is canonically isomorphic to $A_l$. 
    \item The pro-category of finite groups is naturally equivalent to the category of profinite groups, i.e. compact Hausdorff totally disconnected topological groups, see e.g. \cite[Prop. 3.2.12]{Lurie-rational-p-profinite}. Therefore, we will not differ pro systems of finite groups and profinite groups.
    \item Let $R$ be a unital ring (possibly noncommutative). A chain complex of left $R$-modules is perfect if it is chain homotopy equivalent to a bounded chain complex of finitely generated projective left $R$-modules.
    \item (\cite{artin-mazur-etale-homotopy}*{Theorem 4.3}, \cite{morel1993quelques}*{Theorem 2.4.1}) A pro-map $f:X\rightarrow Y$ of pro-spaces is an $l$-adic weak equivalence if one of the following equivalent conditions is satisfied:
\begin{enumerate}
    \item $\widehat{f}_l:\widehat{X}_l\rightarrow \widehat{Y}_l$ induces an isomorphism on pro homotopy groups;
    \item $\holim\widehat{f}_l:\holim\widehat{X}_l\rightarrow \holim\widehat{Y}_l$ is a homotopy equivalence of CW complexes;
    \item $f_*:\pi_1(X)^{\wedge}_l\rightarrow \pi_1(Y)^{\wedge}_l$ is an isomorphism and, for any local system of finite abelian $l$-group $M$ over $Y$, $f$ induces an isomorphism $H^*(Y;M)\rightarrow H^*(X;M)$.
    \item $f$ induces an isomorphism $H^*((Y)^{\wedge}_l;\ZZ/l)\rightarrow H^*((X)^{\wedge}_l;\ZZ/l)$.
\end{enumerate}
\end{itemize}

\section{Finiteness Obstruction for Local Spaces}\label{Section: Local Finiteness}

First recall Wall's result on when a connected CW complex $Y$ is homotopy equivalent to a finite CW complex (see, e.g., \cite{Wall-Finiteness-I}\cite{Wall-Finiteness-II}\cite{Davis-Surgery}\cite{Rosenberg-algebraic-K-theory}\cite{Steve-Ranicki-Survey-Finiteness}\cite{Weinberger-Statified-spaces}). 

Let $R$ be a ring. Let $C_*$ be a perfect chain complex of left $R$-modules, i.e., $C_*$ is chain homotopy equivalent to a bounded chain complex $P_*$ of finitely generated projective $R$-modules. The \textbf{Wall finiteness obstruction} of $C_*$ is the alternating sum $\chi(C_*)=\sum_{i=0}^{\infty}(-1)^i[P_i]$ in the reduced $K$-theory $\widetilde{K}_0(R[\pi])$. This invariant of $C_*$ is independent of the choice of $P_*$.

A CW complex $Y$ is \textbf{finitely dominated} if it is a homotopy retract of a finite CW complex, i.e., there exists a finite CW complex $Z$ together with maps $i:Y\rightarrow Z$ and $r:Z\rightarrow Y$ such that $r\circ i$ is homotopic to $Id_Y$. This is equivalent to saying that the cellular chain complex $C_*(\widetilde{Y};\ZZ)$ of the universal cover $\widetilde{Y}$ of $Y$ is perfect. Wall proves the following theorem.

\begin{fact}[\cite{Wall-Finiteness-I}\cite{Wall-Finiteness-II}]
A connected CW complex $Y$ is homotopy equivalent to a finite CW complex if and only if it is finitely dominated and the Wall finiteness obstruction $\chi(C_*(\widetilde{Y};\ZZ))\in \widetilde{K}_0(\ZZ[\pi_1(Y)])$ vanishes.
\end{fact}

For self-containedness and applications to Section \ref{Section: Complete Finiteness}, we discuss the finiteness obstruction for local spaces in this section. Theorem \ref{Thm: Local Finiteness} has beenobtained in \cite{local-finiteness} but Proposition/Theorem \ref{Prop: Alternative Definition of Finite Dominatedness}, \ref{Thm: Realization of local finiteness obstruction} are not included in this reference. The proofs are fiberwise local modifications for the integral case (e.g., see \cite{pedersen2017wallsfinitenessobstruction}).

One approach to localize non-simply-connected spaces is the fiberwise localization in \cite{Bousfield-Kan}*{Chapter I, Section 8} by Bousfield-Kan. Explicitly, for a CW complex $X$, one may consider the fibration $\widetilde{X}\rightarrow X\rightarrow K(\pi_1(X),1)$ and the fiberwise $l$-localization produces a fibration $\widetilde{X'}\rightarrow X'\rightarrow K(\pi_1(X),1)$ such that $\widetilde{X}'$ is the $l$-localization of the $\widetilde{X}$. Notice that fiberwise localization is functorial (\cite{Bousfield-Kan}*{p.~41, (ii)}).

Let $T$ be a set of primes and let $\ZZ_{(T)}\subset \QQ$ be the localization of $\ZZ$ at $T$. 

\begin{definition}
Two CW complexes are \textbf{fiberwise $T$-local homotopy equivalen}t if their fiberwise $T$-localizations are homotopy equivalent.
\end{definition}

\begin{definition}
A connected CW complex is \textbf{fiberwise $T$-local} if its universal cover is $T$-local.
\end{definition}

The discussion of local finiteness obstructions relies heavily on the following construction and Lemma \ref{Lemma: Localization on homotopy groups}, \ref{Lemma: Main Tech}:

Let $Y,Z$ be CW complexes with a map $f:Z\rightarrow Y$ which induces an isomorphism $\pi_1$. Let $\pi$ be the fundamental group. Assume that $Y$ is fiberwise $T$-local. Let $Z'$ be the fiberwise $T$-localization of $Z$. Then $f$ induces a map $f':Z'\rightarrow Y$ and $\widetilde{Z'}$ is homotopy equivalent to $\widetilde{Z}_{(T)}$. Consider the following diagram.
\[
\begin{tikzcd}
  ... \arrow[r] & \pi_{i}(\widetilde{Z}) \arrow[r,"(\widetilde{f})_*"] \arrow[d] & \pi_{i}(\widetilde{Y}) \arrow[r] \arrow[d,] & \pi_{i}(\widetilde{Y},\widetilde{Z}) \arrow[r] \arrow[d] & ... \\
  ... \arrow[r] & \pi_{i}(\widetilde{Z})\otimes \ZZ_{(T)} \arrow[r,"(\widetilde{f})_{*}\otimes \ZZ_{(T)}"] \arrow[d,"\cong"] & \pi_{i}(\widetilde{Y})\otimes \ZZ_{(T)} \arrow[r] \arrow[d,"\cong"] & \pi_{i}(\widetilde{Y},\widetilde{Z})\otimes \ZZ_{(T)} \arrow[r] \arrow[d,dotted,"\cong"] & ... \\  
  ... \arrow[r] & \pi_{i}(\widetilde{Z'})=\pi_{i}((\widetilde{Z})_{(T)}) \arrow[r,"(\widetilde{f'})_{*}"] & \pi_{i}(\widetilde{Y}) \arrow[r] & \pi_{i}(\widetilde{Y},\widetilde{Z'}) \arrow[r] &  ...
\end{tikzcd}
\]
Since localization is an exact functor, the second row is exact. The universal property of localization induces the dashed arrow. The five-lemma implies that 

\begin{lemma}\label{Lemma: Localization on homotopy groups}
The dashed arrow in the diagram is an isomorphism.    
\end{lemma}

Let $\alpha_j(j=1,...,k)$ be some elements in $\pi_i(Y,Z)\cong \pi_i(\widetilde{Y},\widetilde{Z})$ with $i\geq 2$. Then one may construct the union $W$ of $Z$ with $k$ copies of $i$-cells and a map $g:W\rightarrow Y$ such that the restriction of $g$ on $Z$ is $f$ and that on the $i$-cells correspond to $\alpha_j$'s each. Let $W'$ be the fiberwise $T$-localization of $W$. Let $\beta_j$'s be the images of $\alpha_j$ in $\pi_{i}(\widetilde{Y},\widetilde{Z'})$ as shown in the previous diagram. The following can be deduced easily from the properties of localization. 

\begin{lemma}\label{Lemma: Main Tech}
\begin{enumerate}
    \item  $H_i(\widetilde{W'},\widetilde{Z'};\ZZ)\cong \pi_i(\widetilde{W'},\widetilde{Z'})=\oplus_j \ZZ_{(T)}[\pi]$ and $H_q(\widetilde{W'},\widetilde{Z'};\ZZ)=0$ otherwise.
    \item $H_q(\widetilde{Y},\widetilde{W'};\ZZ)\cong H_{q}(\widetilde{Y},\widetilde{Z'};\ZZ)$ for $q\neq i,i+1$.
    \item There is an exact sequence
    \[
    0\rightarrow H_{i+1}(\widetilde{Y},\widetilde{Z'};\ZZ)\rightarrow H_{i+1}(\widetilde{Y},\widetilde{W'};\ZZ) \rightarrow H_{i}(\widetilde{W'},\widetilde{Z'};\ZZ)=\oplus_j \ZZ_{(T)}[\pi] \xrightarrow{\phi} H_{i}(\widetilde{Y},\widetilde{Z'};\ZZ)\rightarrow H_{i}(\widetilde{Y},\widetilde{W'};\ZZ) \rightarrow 0
    \]
    such that $\phi$ maps $1\in\ZZ_{(T)}[\pi]$ to the Hurewicz image of $\beta_j$. 
\end{enumerate}   
\end{lemma}

\begin{definition}\label{Def: local finite dominatedness}
A fiberwise $T$-local CW complex $Y$ is \textbf{$T$-local finitely dominated} if there exists a finite CW complex $K$ together with maps $Y\xrightarrow{i} K'\xrightarrow{r} Y$ such that $r\circ i$ is homotopy equivalent to $Id_Y$, where $K'$ is the fiberwise $T$-localization of $K$.   
\end{definition}

\begin{proposition}\label{Prop: Alternative Definition of Finite Dominatedness}
A fiberwise $T$-local CW complex $Y$ is $T$-local finitely dominated if and only if the followings hold:
\begin{enumerate}
    \item $\pi=\pi_1(Y)$ is finitely presented;
    \item the cellular chain complex $C_*(\widetilde{Y};\ZZ)$ is perfect over $\ZZ_{(T)}[\pi]$. 
\end{enumerate}
\end{proposition}

\begin{remark}
The chain complex condition is used in the definition of ``T-type FP'' in \cite{local-finiteness}*{Definition 3.2}. We prove here that this is equivalent to a geometric definition, which is analogous to the integral case.
\end{remark}

\begin{proof}
The ``only if'' part: 

By the proof of \cite{Wall-Finiteness-I}*{Lemma 1.3}, not only $\pi$ is dominated, but also the kernel of $r_*:\pi_1(K)\rightarrow \pi$ is normally finitely generated in $\pi_1(K)$. So we may add finitely many $2$-cells to $K$ and get maps $Y\xrightarrow{p} L\xrightarrow{q} Y$ such that $q\circ p$ is homotopy equivalent to $Id_Y$ and $p,q$ both induce isomorphisms on $\pi_1$, where $L$ is the union of $K$ and $2$-cells. Take the fiberwise $T$-localization $L'$ of $L$ and there are new maps $Y\xrightarrow{p'} L'\xrightarrow{q'} Y$. Then the item (2) directly follows from \cite{pedersen2017wallsfinitenessobstruction}*{Lemma 1.9}.

The ``if'' part:
We inductively construct $K$ and the map $r$. Assume that $C_*(\widetilde{Y};\ZZ_{(T)})$ is chain homotopy equivalent to a chain complex $P_*:0\rightarrow P_m\rightarrow ...\rightarrow P_0\rightarrow 0$ of finitely generated projective $\ZZ_{(T)}[\pi]$-modules.

First construct a $2$-dimensional finite CW complex $Z_1$ together with a map $Z_1\rightarrow Y$ which induces an isomorphism on $\pi_1$.

Inductively assume that we have constructed a finite CW complex $Z_n$ of dimension $n$ (except when $n=1$, the dimension is $2$) and a map $Z_n \rightarrow Y$ with $1\leq n\leq m-1$ such that $\pi_1(Z_n)\rightarrow \pi_1(Y)$ is an isomorphism and $\pi_q(Y,Z'_n)=0$ for any $q\leq n$, where $Z'_n$ is the fiberwise $T$-localization of $Z_n$. The map $Z_n\rightarrow Y$ factors through $Z'_n$. 

\hypertarget{Technical-Point}{(*)}: Then there is induced a chain map $C_*(\widetilde{Z'_n};\ZZ)\rightarrow P_*$. Let $D_*$ be its mapping cone, whose homology is isomorphic to $H_*(\widetilde{Y},\widetilde{Z'_n};\ZZ)$. By \cite{pedersen2017wallsfinitenessobstruction}*{Lemma 1.12}, $H_{n+1}(\widetilde{Y},\widetilde{Z'_n};\ZZ)\cong \pi_{n+1}(\widetilde{Y},\widetilde{Z'_n})$ is a finitely generated $\ZZ_{(T)}[\pi]$-module. 

By Lemma \ref{Lemma: Localization on homotopy groups} and the diagram above it, there exist finitely many elements $x_1,...,x_s\in\pi_{n+1}(Y,Z_n)\cong \pi_{n+1}(\widetilde{Y},\widetilde{Z_n})$ whose images generate the $\ZZ_{(T)}[\pi]$-module $\pi_{n+1}(\widetilde{Y},\widetilde{Z'_n})$. As in the construction above Lemma \ref{Lemma: Main Tech}, we can form $Z_{n+1}$ by the union of $Z_n$ and $s$-copies of $(n+1)$-cells together with a map $Z_{n+1}\rightarrow Y$ induced by $x_1,...,x_s$. Lemma \ref{Lemma: Main Tech} deduces that $\pi_q(\widetilde{Y},\widetilde{Z'_{n+1}})=H_q(\widetilde{Y},\widetilde{Z'_{n+1}};\ZZ)=0$ for $q\leq n+1$.

Continue this construction till $Z_{m-1}$. By \cite{pedersen2017wallsfinitenessobstruction}*{Lemma 1.12} and the mapping construction above, $H_q(\widetilde{Y},\widetilde{Z'_{m-1}};\ZZ)=0$ when $q\neq m$ and $P=H_m(\widetilde{Y},\widetilde{Z'_{m-1}};\ZZ)$ is a finitely generated projective $\ZZ_{(T)}[\pi]$-module. Let $Q$ be a finitely generated projective $\ZZ_{(T)}[\pi]$-module such that $P\oplus Q$ is free. Consider the following exact sequence:
\[
0\rightarrow P\otimes \ZZ_{(T)}[t,t^{-1}]\oplus Q\otimes \ZZ_{(T)}[t,t^{-1}] \xrightarrow{\psi_1=(1-t)\oplus 1} P\otimes \ZZ_{(T)}[t,t^{-1}]\oplus Q\otimes \ZZ_{(T)}[t,t^{-1}] \xrightarrow{\psi_2} P\rightarrow 0
\]
Notice that $P\otimes \ZZ_{(T)}[t,t^{-1}]\oplus Q\otimes \ZZ_{(T)}[t,t^{-1}]$ is a free $\ZZ_{(T)}[\pi\times \ZZ]$-module. Consider the map $Z_{m-1}\times S^1\rightarrow Y\times S^1$. Since the universal cover of $S^1$ is contractible, we conclude that $Y\times S^1$ is fiberwise $T$-local, that $(Z_{m-1}\times S^1)'=Z'_{m-1}\times S^1$ and that $H_*(\widetilde{Y\times S^1},\widetilde{Z'_{m-1}\times S^1};\ZZ)\cong H_*(\widetilde{Y},\widetilde{Z'_{m-1}};\ZZ)$. Let $y_1,...,y_s$ be a basis of $P\otimes \ZZ_{(T)}[t,t^{-1}]\oplus Q$. One may scale $y_1,...,y_s$ by some products of the primes not in $T$ so that $\psi_2(y_1),...,\psi_2(y_s)$ are in the image of $\pi_m(Y,Z_{m-1})=\pi_m(\widetilde{Y},\widetilde{Z_{m-1}})\rightarrow \pi_m(\widetilde{Y},\widetilde{Z'_{m-1}})=P$. Attach s copies of $m$-cells to $Z_{m-1}\times S^1$ corresponding to $\psi_2(y_1),...,\psi_2(y_s)$ and we obtain a map $W\rightarrow Y\times S^1$. By Lemma \ref{Lemma: Main Tech}, $H_q(\widetilde{Y\times S^1},\widetilde{W'};\ZZ)=0$ for $q\neq m$ and $H_q(\widetilde{Y\times S^1},\widetilde{W'};\ZZ)=P\otimes \ZZ_{(T)}[t,t^{-1}]\oplus Q\otimes \ZZ_{(T)}[t,t^{-1}]$.

Do the same operation for the map $\psi_1$ and we can get a finite CW complex $K$ and a map $K\rightarrow Y\times S^1$ such that the induced map $K'\rightarrow Y\times S^1$ is a homotopy equivalence. Then the natural inclusion and projection $Y\rightarrow Y\times S^1\simeq K\rightarrow Y$ proves the finite dominatedness of $Y$.
\end{proof}

\begin{definition}\label{Def: local Wall obstruction}
Let $Y$ be a fiberwise $T$-local, $T$-local finitely dominated CW complex with fundamental group $\pi$. Then the chain complex $C_*(\widetilde{Y};\ZZ_{(T)})$ is chain homotopy equivalent to a bounded chain complex $P_*$ of finitely generated $Z_{(T)}[\pi]$-modules. The \textbf{$T$-local Wall finiteness obstruction} is $\chi_T(Y)=\sum_{i=0}^{\infty}(-1)^i[P_i]\in \widetilde{K}_0(\ZZ_{(T)}[\pi])$.
\end{definition}

This definition is the same as \cite{local-finiteness}*{Definition 3.3}.

\begin{theorem}\label{Thm: Local Finiteness}
Let $Y$ be a connected, fiberwise $T$-local CW complex. Then $Y$ is fiberwise $T$-local homotopy equivalent to a finite CW complex if and only if $Y$ is $T$-local finitely dominated and the $T$-local Wall finiteness obstruction $\chi_{T}(Y)\in \widetilde{K}_0(\ZZ_{(T)}[\pi])$ vanishes.
\end{theorem}

\begin{remark}
This theorem is the same as \cite{local-finiteness}*{Proposition 3.2} but its proof is omitted in the reference.
\end{remark}

\begin{proof}
The ``only if'' part is obvious. For the ``if'' part: 

Like the previous proof, we may assume that $C_*(\widetilde{Y};\ZZ_{(T)})$ is chain homotopy equivalent to a chain complex $P_*:0\rightarrow P_m\rightarrow ...\rightarrow P_0\rightarrow 0$ of finitely generated projective $\ZZ_{(T)}[\pi]$-modules. With the same inductive construction, we have $Z_{m-1}\rightarrow Y$. Then $H_q(\widetilde{Y},\widetilde{Z'_{m-1}};\ZZ)=0$ when $q\neq m$ and $\pi_m(\widetilde{Y},\widetilde{Z'_{m-1}})=H_m(\widetilde{Y},\widetilde{Z'_{m-1}};\ZZ)$ is a finitely generated stably free $\ZZ_{(T)}[\pi]$-module due to the vanishing of $\chi_T(Y)$. Then we may take the union of $Z_{m-1}$ with a bouquet of some copies of $S^{m-1}$ to make $H_m(\widetilde{Y},\widetilde{Z'_{m-1}};\ZZ)$ free. Then by Lemma \ref{Lemma: Localization on homotopy groups}, we may find elements in $\pi_m(Y,Z_{m-1})$ such that their images in $\pi_m(\widetilde{Y},\widetilde{Z'_{m-1}})$ form a basis. Attach cells to $Z_{m-1}$ for these elements and obtain a map $Z_m\rightarrow Y$. Lemma \ref{Lemma: Main Tech} shows that this map is a fiberwise $T$-local homotopy equivalence.
\end{proof}

\begin{theorem}\label{Thm: Realization of local finiteness obstruction}
For any element $\sigma\in\widetilde{K}_0(\ZZ_{(T)}[\pi])$ with $\pi$ finitely presented, there exists a fiberwise $T$-local, $T$-local finitely dominated CW complex $Y$ with fundamental group $\pi$ such that $\chi_{T}(Y)=\sigma$.
\end{theorem}

\begin{proof}
$\sigma$ can be represented by a finitely generated projective $\ZZ_{(T)}[\pi]$-module $P$. Let $Q$ be a finitely generated projective $\ZZ_{(T)}[\pi]$-module such that $P\oplus Q$ is free. Consider the following exact sequence:
\[
0\rightarrow Q\oplus P\oplus Q\oplus ...\xrightarrow{\text{shifting}} P\oplus Q\oplus P\oplus Q\oplus ...\rightarrow P\rightarrow 0 
\]

Construct a finite CW complex $K$ with fundamental group $\pi$. Let $L$ be the union with countably many $m$-cells and $K$ so that $H_m(\widetilde{L'},\widetilde{K'};\ZZ)=P\oplus Q\oplus P\oplus Q\oplus ...$. We may attach $(m+1)$-cells to $L$ and form a CW complex $M$ so that $H_{m+1}(\widetilde{M'},\widetilde{L'};\ZZ)\rightarrow H_m(\widetilde{L'},\widetilde{K'};\ZZ)$ corresponds to the shifting map. Then $H_i(\widetilde{M'},\widetilde{K'};\ZZ)=0$ for $i\neq m$ and $H_i(\widetilde{M'},\widetilde{K'};\ZZ)=P$. Then consider the composition of maps $K\times S^1\rightarrow M\times S^1\rightarrow M'\times S^1$. Using the same technique in the last part of the proof of Proposition \ref{Prop: Alternative Definition of Finite Dominatedness}, one can prove that $M'\times S^1$ is fiberwise $T$-local homotopy equivalent to the union of $K\times S^1$ with finitely many cells. That is, $M'\times S^1$ is fiberwise $T$-local homotopy equivalent to a finite CW complex. Then $Y=M'$ is what we want.
\end{proof}

\section{Finiteness Obstruction for l-complete Space with Finite Fundamental Group}\label{Section: Complete Finiteness}

In this section, we discuss the finiteness obstructions for $l$-adic complete homotopy types with finite fundamental groups.

\begin{definition}
Let $Y$ be a connected CW complex with $\pi_1(Y)$ finite. $Y$ is \textbf{finite type} (\textbf{$l$-local finite type}, \textbf{$l$-complete finite type} resp.) if $\pi_q(Y)$ is a finitely generated module over $\ZZ$ ($\ZZ_{(l)}$, $\widehat{\ZZ}_l$ resp.) for any $q\geq 2$.
\end{definition}

\begin{definition}\label{Def: mod-l finitely dominated}
Let $Y$ be a connected CW complex with $\pi=\pi_1(Y)^{\wedge}_l$ a finite $l$-group. $Y$ is \textbf{mod-$l$ finitely dominated} if the chain complex $C_*((Y_l)^{\sim};\FF_l)$ of $\FF_l[\pi]$-modules is perfect, where $(Y_l)^{\sim}$ is the universal cover of the $l$-profinite completion $Y_l$.
\end{definition}

The following is a corollary of Fact \ref{Fact: Projective Modules are Free}.

\begin{lemma}
$C_*((Y_l)^{\sim};\FF_l)$ is perfect if and only if it is chain homotopy equivalent to a bounded chain complex of finitely generated free $\FF_l[\pi]$-modules.
\end{lemma}

The following is a special case of \cite{artin-mazur-etale-homotopy}*{p.~52, Theorem 4.11}. 

\begin{lemma}\label{Lem: Commutativity of universal cover and l-completion}
Let $Y$ be a connected CW complex with $\pi_1(Y)$ finite. Then the canonical map $(\widetilde{Y})_l\rightarrow (Y_l)^{\sim}$ is a homotopy equivalence.
\end{lemma}

\begin{lemma}\label{Lem: Finite dominated implies finite type}
If a connected CW complex $Y$ with $\pi_1(Y)$ finite is mod-$l$-finitely dominated, then $Y_l$ is $l$-complete finite type.
\end{lemma}

\begin{proof}
By definition, $H_q(\widetilde{Y};\ZZ/l)$ is finite dimensional for any $q$. Assume $Y^{\wedge}_l=\{Z_i\}_{i\in I}$. By Lemma \ref{Lem: Commutativity of universal cover and l-completion}, $\{\widetilde{Z_i}\}_{i\in I}$ is the $l$-adic completion of $\widetilde{Y}$, where each $\widetilde{Z_i}$ is the universal cover of $Z_i$. By \cite{artin-mazur-etale-homotopy}*{Theorem 4.3}, $H^q(\widetilde{Y};\ZZ/l)\cong \varinjlim_i H^q(\widetilde{Z_i};\ZZ/l)$ for any $q$. 

By the universal coefficient theorem, $H_q(\widetilde{Y};\ZZ/l)\cong \homo(H^q(\widetilde{Y};\ZZ/l),\ZZ/l)$ and $H_q(\widetilde{Z_i};\ZZ/l)\cong \homo(H^q(\widetilde{Z_i};\ZZ/l),\ZZ/l)$ since $H_q(\widetilde{Y};\ZZ/l)$ and $H_q(\widetilde{Z_i};\ZZ/l)$ are both finite dimensional. By Pontryagin duality, 
\begin{multline*}
\varprojlim_i \homo(H^q(\widetilde{Z_i};\ZZ/l),\ZZ/l)=\varprojlim_i \homo(H^q(\widetilde{Z_i};\ZZ/l),\QQ/\ZZ)\cong \homo(\varinjlim_i H^q(\widetilde{Z_i};\ZZ/l),\QQ/\ZZ) \\
= \homo(\varinjlim_i H^q(\widetilde{Z_i};\ZZ/l),\ZZ/l)
\end{multline*}

Hence, $H_q(\widetilde{Y};\ZZ/l)$$\cong \varprojlim_i H_q(\widetilde{Z_i};\ZZ/l)$ for any $q$. By \cite{Hu-Zhang-formal-manifold}*{Lemma 1.19}, $H_q((\widetilde{Y})_l;\ZZ/l)$ is finite for any $q$. By \cite{Lurie-rational-p-profinite}*{Theorem 3.4.12} or \cite{Hu-Zhang-formal-manifold}*{Theorem 1.5}, $(Y_l)^{\sim}\simeq( \widetilde{Y})_l$ is $l$-complete finite type. Then so is $Y_l$. 
\end{proof}

The possible Wall finiteness obstruction for $l$-profinite complete homotopy types in the reduced $K$-theory $\widetilde{K}_0(\FF_l[\pi])$ is vacuous for $\pi$ a finite $l$-group due to the following fact.

\begin{fact}\label{Fact: Projective Modules are Free}\label{cor: k0 is z}
Any finitely generated projective $\FF_{l^s}[\pi]$-module is free if $\pi$ is a finite $l$-group, where $\FF_{l^s}$ is the finite field of cardinality $l^s$.
In particular, the group $K_0(\FF_{l^s}[\pi])\cong \ZZ$ is generated by the free $\FF_{l^s}[\pi]$-module of rank $1$.
\end{fact}

\begin{proof}
It is known that the only simple left/right $\FF_{l^s}[\pi]$-module is the trivial module $\FF_{l^s}$ (see \cite{Serre-local-fields}*{Chapter IX, Theorem 2}). Then the fact follows from \cite{Serre-representation-finite-groups}*{Chapter 14, Proposition 4.1 and Corollary 1}.
\end{proof}

At first, it may seem that we only need to take care of an appropriate definition for the finite dominatedness in the $l$-adic setting.
However, there appears some other technical difficulty: even in the simply-connected case, not every $l$-profinite complete ``finite'' space can be realized by a finite CW complex (see \cite{Victor-Wilkerson-Counterexample-p-completion}*{p.~573 the last paragraph} for a counterexample). At least, the $l$-profinite complete spaces need to be the $l$-profinite completion of some $l$-local space. 

\begin{definition}\label{Def: l-local lifting}
An $l$-complete finite type CW complex $Y_l$ with finite fundamental group admits an \textbf{$l$-local lifting} if there exists a fiberwise $l$-local finite type CW complex $Z$ with the same fundamental group such that the fiberwise $l$-completion of $Z$ is homotopy equivalent to $Y_l$.
\end{definition}

\begin{lemma}
Let $Z$ be a connected CW complex with $\pi=\pi_1(Z)$ a finite $l$-group. Assume that either $Z$ is finite type or $l$-local finite type. Then there is a natural homotopy equivalence $Z_l\rightarrow Z'$, where $Z'$ is the fiberwise $l$-completion of $Z$.
\end{lemma}

\begin{proof}
Consider the following diagram the fiberwise $l$-completion of $Z$.
\[
\begin{tikzcd}
   \widetilde{Z} \arrow[r] \arrow[d] &  Z \arrow[r] \arrow[d] & K(\pi,1) \arrow[d,phantom, sloped, "="] \\
   \widetilde{Z'}\simeq(\widetilde{Z})_l \arrow[r] &  Z' \arrow[r] & K(\pi,1)
\end{tikzcd}
\]
By the finite-type assumption, $Z'$ is $l$-complete finite type and $\pi_1(Z')$ is a finite $l$-group. Therefore, $\widetilde{Z'}$ is $l$-profinite complete. So $\pi_q(Z')\cong \pi_q(\widetilde{Z'})\cong \pi_q(((Z')_l)^{\sim})\cong \pi_q((Z')_l)$ for any $q\geq 2$ due to Lemma \ref{Lem: Commutativity of universal cover and l-completion}.
Hence $Z'$ is also $l$-profinite complete. Then the map $Z\rightarrow Z'$ factors through $Z_l$. This induces a map $(Z_l)^{\sim}\rightarrow \widetilde{Z'}$ and this map is a homotopy equivalence due to Lemma \ref{Lem: Commutativity of universal cover and l-completion}. 
\end{proof}

A direct corollary is the following.

\begin{corollary}\label{Cor: completion = fiberwise completion + localization}
Let $Z$ be a finite type CW complex with $\pi_1(Z)$ a finite $l$-group. Then the $l$-profinite completion $Z_l$ of $Z$ is the fiberwise $l$-completion of the fiberwise $l$-localization of $Z$. 
\end{corollary}

\begin{proposition}\label{Lemma: a necessary condition for finiteness obstruction}
Let $Y$ be a connected CW complex with $\pi=\pi_1(Y)^{\wedge}_l$ finite. If $Y$ is $l$-adic weak equivalent to a finite CW complex, then $Y$ is mod-$l$ finitely dominated.  
\end{proposition}

\begin{proof}
Assume that $Y_l$ is the $l$-profinite completion of a finite CW complex $X$. Since $\pi_1(X)^{\wedge}_l\cong \pi_1(Y_l)$ is finite, there exists a finite covering space $X'$ of $X$ corresponding to the quotient $\pi_1(X)\rightarrow \pi_1(X)^{\wedge}_l$, together with a map $X'\rightarrow \widetilde{Y_l}$. That is, the pro-space of the $l$-profinite completion $(X')^{\wedge}_l$ is simply-connected. Then $H^*(X';\FF_l)\cong H^*((X')^{\wedge}_l;\FF_l)\cong H^*((\widetilde{Y})^{\wedge}_l;\FF_l)$ by the definition of $l$-adic weak equivalence. 

Assume that $(\widetilde{Y})^{\wedge}_l$ is the pro space $\{Z_i\}$. By definition, $H^q((\widetilde{Y})^{\wedge}_l;\FF_l)=\varinjlim_i H^q(Z_i;\FF_l)$. By the same argument as the second paragraph of the proof of Lemma \ref{Lem: Finite dominated implies finite type}, $\varprojlim_i H_q(Z_i;\FF_l)\cong \homo(\varinjlim_i H^q(Z_i;\FF_l),\FF_l)$. Then $\varprojlim_i H_q(Z_i;\FF_l)$ is finite dimensional. By \cite{Hu-Zhang-formal-manifold}*{Lemma 1.19}, $H^q(\widetilde{Y}_l;\FF_l)\cong \varinjlim_i H^q(Z_i;\FF_l)=H^q((\widetilde{Y})^{\wedge}_l;\FF_l)$.
 So the natural quasi-isomorphism $C_*(X';\FF_l)\rightarrow C_*(\widetilde{Y_l};\FF_l)$ proves that $Y$ is mod-$l$ finitely dominated.    
\end{proof}

\begin{theorem}\label{Thm: Main Theorem 1}
Let $Y$ be a connected CW complex with $\pi=\pi_1(Y)^{\wedge}_l$ finite such that its $l$-profinite completion $Y_l$ is $l$-complete finite type. Then $Y$ is $l$-adic weak equivalent to a finite CW complex with fundamental group $\pi$ if and only if $Y$ is mod-$l$ finitely dominated and $Y_l$ admits an $l$-local lifting.
\end{theorem}

\begin{proof}
For the ``only if'' part, Proposition \ref{Lemma: a necessary condition for finiteness obstruction} proves the first half. Now assume $\pi_1(X)=\pi$ and then the $l$-local liftability of $Y_l$ follows from Corollary \ref{Cor: completion = fiberwise completion + localization}. 

The ``if'' part can be directly deduced from the following proposition.  
\end{proof}

\begin{proposition}\label{prop: finiteness obstruction for the local case}
Let $Y$ be a connected, pointed, $l$-local finite type CW complex with $\pi=\pi_1(Y)$ a finite $l$-group. Then $Y$ is the fiberwise $l$-localization of a finite CW complex with fundamental group $\pi$ if and only if $Y$ is mod-$l$ finitely dominated.
\end{proposition}

\begin{proof}
The ``only if'' part is obvious. We are left with the ``if'' part. The proof is almost the same as Theorem \ref{Thm: Local Finiteness}. A difference in the inductive step is the paragraph \hyperlink{Technical-Point}{(*)}. 

Assume that $C_*(\widetilde{Y};\FF_l)$ is chain homotopy equivalent to a chain complex $F_*:0\rightarrow F_m\rightarrow ...\rightarrow F_0\rightarrow 0$ of finitely generated free $\FF_l[\pi]$-modules. Inductively assume that we have constructed a finite CW complex $Z_n$ of dimension $n$ (except when $n=1$) and a map $Z_n \rightarrow Y$ with $1\leq n\leq m-1$ such that $\pi_1(Z_n)\rightarrow \pi_1(Y)$ is an isomorphism and $\pi_q(Y,Z'_n)=0$ for any $q\leq n$, where $Z'_n$ is the fiberwise $l$-localization of $Z_n$. As $Y$ is $l$-local finite type, $H_{n+1}(\widetilde{Y},\widetilde{Z'_n};\ZZ)\cong \pi_{n+1}(\widetilde{Y},\widetilde{Z'_n})$ is a finitely generated $\ZZ_{(l)}$-module, in particular, finitely generated over $\ZZ_{(l)}[\pi]$. The rest inductive construction is the same as the proof of Theorem \ref{Thm: Local Finiteness}.

For the last step, we have constructed a finite CW complex $Z_{m-1}$ and a map $Z_{m-1}\rightarrow Y$ such that $\pi_1(Z_{m-1})\rightarrow \pi_1(Y)$ is an isomorphism and $\pi_q(Y,Z'_{m-1})=0$ for any $q\leq m-1$. Let $Z'_{m-1}$ be the fiberwise $l$-localization of $Z_{m-1}$.

The natural chain map $C_*(\widetilde{Z_{m-1}};\FF_l)\rightarrow C_*(\widetilde{Y};\FF_l)$ induces a chain map $C_*(\widetilde{Z_{m-1}};\FF_l)\rightarrow F_*$ since $C_*(\widetilde{Z_{m-1}};\FF_l)$ is free. Then its mapping cone $D_*$ has the form $0\rightarrow D_m\rightarrow ...\rightarrow D_0$, where each $D_i$ is a finitely generated free $\FF_l[\pi]$-module.

Since $H_*(\widetilde{Z_{m-1}};\FF_l)\cong H_*(\widetilde{Z'_{m-1}};\FF_l)$, $D_*$ is chain homotopy equivalent to $C_*(\widetilde{Y},\widetilde{Z'_{m-1}};\FF_l)$. So $H_q(D_*)=0$ for any $q\neq m$ and by \cite{pedersen2017wallsfinitenessobstruction}*{Lemma 1.12}, $H_m(\widetilde{Y},\widetilde{Z'_{m-1}};\FF_l)\cong H_m(D_*)$ is a finitely generated free $\FF_l[\pi]$-module. 

Since $\pi_{m-1}(\widetilde{Y},\widetilde{Z'_{m-1}})=H_{m-1}(\widetilde{Y},\widetilde{Z'_{m-1}};\ZZ)=0$, $\pi_m(\widetilde{Y},\widetilde{Z_{m-1}})\otimes \FF_l\cong \pi_m(\widetilde{Y},\widetilde{Z'_{m-1}})\otimes \FF_l\cong H_m(\widetilde{Y},\widetilde{Z'_{m-1}};\ZZ)\otimes \FF_l\cong H_m(\widetilde{Y},\widetilde{Z'_{m-1}};\FF_l)$ by the universal coefficient theorem.

Then choose elements $y_1,...,y_t$ in $\pi_m(Y,Z_{m-1})\cong \pi_m(\widetilde{Y},\widetilde{Z_{m-1}})$ such that their image in $H_m(\widetilde{Y},\widetilde{Z'_{m-1}};\FF_l)$ form a basis as a free $\FF_l[\pi]$-module. Then we can obtain $Z_m$ and a map $Z_m\rightarrow Y$ induced by $y_1,...,y_t$. Analogous to Lemma \ref{Lemma: Main Tech}, the long exact sequence of $\FF_l$-coefficient homologies for the triple $(\widetilde{Z'_{m-1}},\widetilde{Z'_m},\widetilde{Y})$ implies that $H_q(\widetilde{Y},(\widetilde{Z'_m});\FF_l)=0$ for any $q$. So $Z_m$ is a finite CW complex fiberwise $l$-local homotopy equivalent to $Y$.
\end{proof}

\begin{remark}\label{Rem: vanishing of k-group}
As a corollary of Proposition \ref{prop: finiteness obstruction for the local case} and Theorem \ref{Thm: Local Finiteness}, such a $Y$ is mod-$l$ finitely dominated if and only if $Y$ is $l$-local finitely dominated and the $l$-local Wall finite obstruction vanishes. This implies that $\widetilde{K}_0(\ZZ_{(l)}[\pi])=0$ for any finite $l$-group $\pi$. We will give a more elementary algebraic proof of this fact in Appendix.
\end{remark}

\section{Topological Finiteness of Varieties with Finite Fundamental Group}

We will apply the the previous results of finiteness obstruction to algebraic geometry. Before this, we establish some finiteness result for varieties in this section. Let $l$ be a prime number. 

\begin{definition}
A pointed, connected CW complex $Y$ is \textbf{$\pi$-$l$-finite} if each homotopy group $\pi_q(Y)$ is a finite $l$-group.    
\end{definition}

Let $\{Y_i\}_{i\in I}$ be a pro-space with each $Y_i$ $\pi$-$l$-finite. Assume that $\pi=\varprojlim_i \pi_1(Y_i)$ is finite. Let $\widetilde{Y}_i$ be the universal cover of $Y_i$.

\begin{lemma}\label{Lem: Univerisal Cover Cohomology}
If $\varinjlim_i H^q(Y_i;\widetilde{A})$ is finite for any $q$ and for any local coefficient $\widetilde{A}$ of finite $l$-groups for $\{Y_i\}$, then $\varinjlim_i H^q(\widetilde{Y}_i;\FF_l)$ is finite for any $q$.
\end{lemma}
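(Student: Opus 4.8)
The plan is to relate the cohomology of each universal cover $\widetilde{Y}_i$ to the cohomology of $Y_i$ with local coefficients via the Cartan–Leray (equivariantly, the Lyndon–Hochschild–Serre) spectral sequence for the covering $\widetilde{Y}_i \to Y_i$ with deck group $\pi_i := \pi_1(Y_i)$. Since the target $\varprojlim_i \pi_1(Y_i) = \pi$ is finite and each $\pi_i$ is a finite $l$-group, the system $\{\pi_i\}$ stabilizes up to the relevant colimit: for $i$ large enough in a cofinal subsystem the maps $\pi \to \pi_i$ are isomorphisms, so one may as well assume each $\pi_i = \pi$ and that the transition maps are compatible with the $\pi$-actions. (If one prefers to avoid stabilization, one runs the argument with the genuinely pro-system of finite groups $\{\pi_i\}$ and takes the colimit of spectral sequences at the end; filtered colimits are exact, so this is harmless.)

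The key computation is then the following. For each $i$ there is a first-quadrant spectral sequence
\begin{equation*}
E_2^{s,t} = H^s\bigl(\pi;\, H^t(\widetilde{Y}_i;\FF_l)\bigr) \;\Longrightarrow\; H^{s+t}(Y_i;\FF_l),
\end{equation*}
and more generally, twisting by a local system, $E_2^{s,t} = H^s(\pi; H^t(\widetilde{Y}_i;\FF_l) \otimes_{\FF_l} N)$ converges to $H^{s+t}(Y_i; \widetilde{N})$ for a finite-dimensional $\FF_l[\pi]$-module $N$. These spectral sequences are natural in $i$, so passing to the filtered colimit over $I$ (exact, hence commuting with taking cohomology of the complexes and with the spectral sequence machinery) yields a spectral sequence
\begin{equation*}
\varinjlim_i H^s\bigl(\pi; H^t(\widetilde{Y}_i;\FF_l)\otimes N\bigr) \;\Longrightarrow\; \varinjlim_i H^{s+t}(Y_i;\widetilde{N}).
\end{equation*}
By hypothesis the abutment is finite in every total degree. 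The strategy is now an induction on $t$ to show each $\varinjlim_i H^t(\widetilde{Y}_i;\FF_l)$ is finite. The base case $t=0$ is clear since $\widetilde{Y}_i$ is connected, so $H^0(\widetilde{Y}_i;\FF_l)=\FF_l$. For the inductive step, fix $t\ge 1$ and suppose $V^{<t} := \{\varinjlim_i H^j(\widetilde{Y}_i;\FF_l)\}_{j<t}$ are all finite. These low-degree colimits are finite $\FF_l[\pi]$-modules, hence so are all their group-cohomology groups $H^s(\pi; V^j \otimes N)$; thus every entry $E_2^{s,t'}$ with $t'<t$ is finite, and everything that can map into or out of the fringe affecting total degree $t$ from those rows contributes only finitely. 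Reading off the total-degree-$t$ part of the abutment, finiteness of $\varinjlim_i H^t(Y_i;\widetilde{N})$ forces the edge piece coming from row $t$, namely a subquotient built from $E_\infty^{0,t} \subseteq E_2^{0,t} = H^0(\pi; \varinjlim_i H^t(\widetilde{Y}_i;\FF_l)\otimes N) = \bigl(\varinjlim_i H^t(\widetilde{Y}_i;\FF_l)\otimes N\bigr)^{\pi}$, to be finite, and similarly all higher $E_r^{0,t}$ and the group-cohomology contributions $H^s(\pi; \varinjlim_i H^t(\widetilde{Y}_i;\FF_l)\otimes N)$ for $s\ge 1$ are finite after accounting for the finitely many differentials. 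To convert finiteness of $\pi$-invariants-and-higher-cohomology into finiteness of the module itself, I will choose $N$ well: for a finite $l$-group $\pi$, the group ring $\FF_l[\pi]$ is local with residue field $\FF_l$, so for any $\FF_l[\pi]$-module $W$ one has $W^{\pi}\ne 0$ whenever $W\ne 0$, and more usefully $W$ is finite if and only if $W^\pi$ is finite and $W$ is finitely generated — but $W = \varinjlim_i H^t(\widetilde{Y}_i;\FF_l)$ need not obviously be finitely generated. The clean route: take $N = \FF_l[\pi]$ (the regular representation, a genuine local system on $Y_i$ corresponding to the universal cover). Then $H^t(\widetilde{Y}_i;\FF_l)\otimes_{\FF_l}\FF_l[\pi]$ is a free $\FF_l[\pi]$-module, so its higher group cohomology vanishes and $E_2^{s,t}=0$ for $s>0$ in that row, and $E_2^{0,t} = H^0(\pi; H^t(\widetilde{Y}_i;\FF_l)\otimes\FF_l[\pi]) \cong H^t(\widetilde{Y}_i;\FF_l)$ as $\FF_l$-vector spaces; moreover $H^t(Y_i;\widetilde{\FF_l[\pi]}) \cong H^t(\widetilde{Y}_i;\FF_l)$ by Shapiro's lemma. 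So with this choice the spectral sequence degenerates into a direct comparison and the conclusion is immediate — in fact one does not even need the induction with this $N$.

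Accordingly, the core of the proof collapses to a single clean step: apply the hypothesis to the local system $\widetilde{A} = \widetilde{\FF_l[\pi_i]}$ on $Y_i$ (the local system associated to the $\pi_i$-action on its own group ring), observe by Shapiro's lemma (equivalently, the fact that the chain complex of $Y_i$ with coefficients in $\FF_l[\pi_i]$ is the chain complex of $\widetilde{Y}_i$ with $\FF_l$-coefficients) that $H^q(Y_i;\widetilde{\FF_l[\pi_i]}) \cong H^q(\widetilde{Y}_i;\FF_l)$ compatibly with the transition maps, and take $\varinjlim_i$. The finiteness of $\varinjlim_i H^q(\widetilde{Y}_i;\FF_l)$ then follows directly from the finiteness of $\varinjlim_i H^q(Y_i;\widetilde{\FF_l[\pi_i]})$. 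The one point requiring care — and the step I expect to be the main technical obstacle — is the compatibility of the local systems $\widetilde{\FF_l[\pi_i]}$ across the pro-system: the transition maps $Y_i \to Y_j$ induce group homomorphisms $\pi_i \to \pi_j$ which need not be isomorphisms before stabilization, so $\widetilde{\FF_l[\pi_i]}$ is not literally pulled back from $\widetilde{\FF_l[\pi_j]}$. This is resolved exactly by the remark above that $\{\pi_i\}$ pro-represents the finite group $\pi$: restricting to a cofinal subsystem on which $\pi_i \xrightarrow{\sim} \pi$, the universal covers $\widetilde{Y}_i$ are compatibly $\pi$-spaces, $\widetilde{\FF_l[\pi]}$ is a coherent local system on the subsystem $\{Y_i\}$, and the identification $H^q(Y_i;\widetilde{\FF_l[\pi]})\cong H^q(\widetilde{Y}_i;\FF_l)$ is natural in $i$. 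Passing to the colimit over this cofinal subsystem (which computes the same colimit as over $I$) completes the argument.
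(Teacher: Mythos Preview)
Your final argument---choose the local coefficient $\widetilde{A}=\FF_l[\pi]$ and invoke the Shapiro-type identification $H^q(Y_j;\widetilde{\FF_l[\pi]})\cong H^q(\widetilde{Y}_j;\FF_l)$---is exactly the paper's proof; the spectral-sequence discussion preceding it is superfluous, as you yourself observe.

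The one genuine gap is in your reduction step. You claim that because $\pi=\varprojlim_i\pi_1(Y_i)$ is finite, one may pass to a cofinal subsystem on which the canonical maps $\pi\to\pi_i$ are isomorphisms. This fails in general: take $I=\NN$, each $Y_i=B(\ZZ/l)$, and all transition maps the constant map to the basepoint; then every $\pi_i=\ZZ/l$, every transition homomorphism is zero, and $\varprojlim_i\pi_i=0$, yet no $\pi_i$ is trivial. Being pro-isomorphic to the constant system $\pi$ (which is all that finiteness of the limit buys you, via the equivalence between pro-finite-groups and profinite groups) does \emph{not} furnish a cofinal subsystem with $\pi_i\cong\pi$, so your proposed resolution of the compatibility issue for $\widetilde{\FF_l[\pi_i]}$ does not go through as stated. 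The paper repairs this by first replacing each $Y_i$ with the covering space $Y_{H_i}$ corresponding to the subgroup $H_i:=\operatorname{image}(\pi\to\pi_1(Y_i))\le\pi_1(Y_i)$. The map of pro-spaces $\{Y_{H_i}\}\to\{Y_i\}$ induces isomorphisms on all pro-homotopy groups, hence is a weak equivalence and the finiteness hypothesis transfers. In the new system each $\pi\to\pi_1(Y_{H_i})=H_i$ is surjective by construction, so finiteness of $\pi$ now genuinely forces the $H_i$ to stabilize on a cofinal subsystem, after which your Shapiro argument applies verbatim.
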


\begin{proof}
Consider the pro system of finite $l$-groups $\{\pi_1(Y_i)\}_i$. Let $H_i$ be the image of $\pi\rightarrow \pi_1(Y_i)$ and let $Y_{H_i}$ be the corresponding covering space of $Y_i$. Then the covering map of pro-spaces $\{Y_{H_i}\}\rightarrow \{Y_i\}$ is a weak equivalence since this map induces an isomorphism on the pro homotopy groups. By \cite{artin-mazur-etale-homotopy}*{Theorem 4.3}, for any local coefficient $\widetilde{A}$ of finite $l$-groups for $\{Y_i\}$, $\varinjlim_i H^q(Y_{H_i};\widetilde{A})\cong \varinjlim_i H^q(Y_i;\widetilde{A})$ is also finite.

Hence, we may assume that each homomorphism in the pro system $\{\pi_1(Y_i)\}_i$ is a surjection. Then the canonical map $\pi\rightarrow \pi_1(Y_i)$ is also surjective for any $i$. Since the inverse limit $\pi$ is finite, there exists a cofinal subsystem $J\rightarrow I$ such that $\pi_1(Y_j)$ is isomorphic to $\pi$ for any $j\in J$ and each homomorphism in the pro system $\{\pi_1(Y_j)\}_{j\in J}$ is an isomorphism.

Now take $\widetilde{A}=\FF_l[\pi]$. Then $H^q(\widetilde{Y}_j;\FF_l)\cong H^q(Y_j;\widetilde{A})$ for any $q,j$ (see \cite{May-local-coefficient}*{Example 4}). So the lemma holds.
\end{proof}

\begin{corollary}\label{cor: pi q fin gen}
With the same assumption as Lemma \ref{Lem: Univerisal Cover Cohomology}, $\varprojlim_i \pi_q(Y_i)\cong \varprojlim_i \pi_q(\widetilde{Y}_i)$ is a finitely generated $\widehat{\ZZ}_l$-module for any $q\geq 2$.
\end{corollary}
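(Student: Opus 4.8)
The plan is to bootstrap from the mod-$\FF_l$ cohomological finiteness established in Lemma \ref{Lem: Univerisal Cover Cohomology} to finite generation of the pro homotopy groups, by passing to the universal covers and invoking a profinite mod-$l$ Hurewicz/Whitehead argument. First I would reduce to the cofinal subsystem $J$ produced in the proof of Lemma \ref{Lem: Univerisal Cover Cohomology}, so that $\pi_1(Y_j)\cong\pi$ for all $j\in J$ with all transition maps isomorphisms; then the universal covers $\widetilde{Y}_j$ form a pro-space of simply-connected $\pi$-$l$-finite spaces, and the covering projections identify $\varprojlim_j\pi_q(Y_j)\cong\varprojlim_j\pi_q(\widetilde{Y}_j)$ for all $q\geq 2$ (and both sides are automatically finite $l$-groups at each stage, hence $\varprojlim$ of finite $l$-groups is a profinite $\widehat{\ZZ}_l$-module; the content is that it is finitely generated). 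The $q=1$ case is immediate since $\pi$ is finite.

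Next I would pass to the $l$-profinite completion $\{\widetilde{Y}_j\}^{\wedge}_l$, which by Lemma \ref{Lem: Univerisal Cover Cohomology} has $\varinjlim_j H^q(\widetilde{Y}_j;\FF_l)$ finite for every $q$; dually the continuous pro-homology $\varprojlim_j H_q(\widetilde{Y}_j;\FF_l)$ is a finite-dimensional $\FF_l$-vector space for every $q$. Now apply the profinite (mod-$l$) Hurewicz theorem in the simply-connected setting: for a simply-connected pro-$l$-finite space $Z$ with $\pi_q(Z)$ the first nonvanishing homotopy pro-group, $\pi_q(Z)^{\wedge}_l\cong H_q(Z;\ZZ_l)$, and more usefully the mod-$l$ Hurewicz map $\pi_q(Z)\otimes\FF_l\to H_q(Z;\FF_l)$ is surjective onto the first nonvanishing homology. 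Running this inductively up the Postnikov tower of the pro-space $\{\widetilde{Y}_j\}^{\wedge}_l$: having shown $\pi_i$ is finitely generated over $\widehat{\ZZ}_l$ for $i<q$, the fibre of the truncation $Z\to Z_{<q}$ is an Eilenberg--MacLane pro-space $K(\pi_q,q)$, and a Serre spectral sequence (or the relative Hurewicz theorem) comparison shows $\pi_q\otimes\FF_l$ injects into a subquotient of $H_*(\widetilde{Y}_j;\FF_l)$ built from finitely many homology groups, hence is finite-dimensional over $\FF_l$. By the topological Nakayama lemma for profinite modules over $\widehat{\ZZ}_l$ (a profinite $\widehat{\ZZ}_l$-module $M$ with $M/lM$ finite-dimensional is topologically finitely generated), this gives that $\varprojlim_j\pi_q(\widetilde{Y}_j)$ is a finitely generated $\widehat{\ZZ}_l$-module, completing the induction.

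The main obstacle I expect is making the Postnikov induction genuinely work at the level of \emph{pro}-spaces rather than spaces: one must check that truncation, fibre sequences, and the Serre spectral sequence are available in the pro-homotopy category of $\pi$-$l$-finite spaces and are compatible with the $\varprojlim$/$\varinjlim$ operations, so that the finiteness of $\varinjlim_j H^*(\widetilde{Y}_j;\FF_l)$ really does bound each $\pi_q$ uniformly in $j$. The cleanest way around this is to work with the single space $\holim_j(\widetilde{Y}_j)^{\wedge}_l$: by the equivalences recalled in the Terminologies section, this is the $l$-completion of a simply-connected space whose $\FF_l$-cohomology is $\varinjlim_j H^*(\widetilde{Y}_j;\FF_l)$, hence finite-dimensional in each degree, and the classical fact (Serre, or \cite{Bousfield-Kan}) that a simply-connected space with degreewise finite-dimensional $\FF_l$-homology has degreewise finitely generated $l$-complete homotopy groups applies directly; then one transports back via $\pi_q\bigl(\holim_j(\widetilde{Y}_j)^{\wedge}_l\bigr)\cong\varprojlim_j\pi_q(\widetilde{Y}_j)^{\wedge}_l\cong\varprojlim_j\pi_q(\widetilde{Y}_j)$, the last isomorphism because each $\pi_q(\widetilde{Y}_j)$ is already a finite $l$-group. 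A secondary subtlety is the $q=2$ base case and the identification $\varprojlim_i\pi_q(Y_i)\cong\varprojlim_i\pi_q(\widetilde{Y}_i)$, but this is routine: covering space theory gives $\pi_q(Y_i)\cong\pi_q(\widetilde{Y}_i)$ for $q\geq2$ compatibly with transition maps, and $\varprojlim$ is exact on the resulting pro-system of finite groups.
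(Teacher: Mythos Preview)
Your proposal is correct and follows essentially the same route as the paper: reduce to the simply-connected case via the universal covers (using the cofinal subsystem with constant $\pi_1$), and then deduce finite generation of $\varprojlim\pi_q$ from the finiteness of $\varinjlim H^q(-;\FF_l)$ established in Lemma \ref{Lem: Univerisal Cover Cohomology}. The only difference is packaging: the paper dispatches the simply-connected step by citing \cite{Hu-Zhang-formal-manifold}*{Theorem 1.5} as a black box, whereas you sketch its content (Postnikov induction with topological Nakayama, or equivalently passing to $\holim$ and invoking the classical Serre/Bousfield--Kan finiteness result for simply-connected spaces with degreewise finite $\FF_l$-homology).
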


\begin{proof}
This is reduced to the simply-connected case by passing to the universal covers of $Y_i$'s.
By \cite{Hu-Zhang-formal-manifold}*{Theorem 1.5}, the simply-connected case follows from the conclusion of Lemma \ref{Lem: Univerisal Cover Cohomology}.
\end{proof}

\begin{corollary}\label{cor: finite gen piq}
Let $X$ be a connected, pointed, locally Noetherian scheme. Let $(\pi^{\et}_1(X))^{\wedge}_l$ be the pro-group defined by the $l$-profinite completion.  Assume that $\varprojlim (\pi^{\et}_1(X))^{\wedge}_l$ is a finite group and that for any locally constant sheaf $A$ of abelian finite $l$-groups $H^{q}_{\et}(X;A)$ is finite for any $q$, then $\varprojlim \pi_q(X^{\wedge}_{\et,l})$ is a finitely generated $\widehat{\ZZ}_l$-module for any $q\geq 2$.
\end{corollary}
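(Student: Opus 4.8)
The plan is to apply Corollary~\ref{cor: pi q fin gen} to the pro-space $\{Y_i\}_{i\in I}$ representing the $l$-profinite completion $X^{\wedge}_{\et,l}$ of the \'etale homotopy type of $X$; the only thing that needs checking is that the standing hypotheses there --- and in particular the cohomological hypothesis of Lemma~\ref{Lem: Univerisal Cover Cohomology} --- follow from the \'etale-cohomological hypothesis on $X$.

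First I would set up the translation. Since $X$ is locally Noetherian, its \'etale homotopy type $X_{\et}$ is a pro-$\pi$-finite space with $\pi_1(X_{\et})=\pi^{\et}_1(X)$, and the Artin--Mazur $l$-profinite completion $X^{\wedge}_{\et,l}$ is represented by a pro-system $\{Y_i\}_{i\in I}$ of $\pi$-$l$-finite spaces. Set $\pi:=(\pi^{\et}_1(X))^{\wedge}_l$; then $\varprojlim_i\pi_1(Y_i)=\pi_1(X^{\wedge}_{\et,l})=(\pi^{\et}_1(X))^{\wedge}_l=\pi$, which is finite by hypothesis. This is precisely the standing setup preceding Lemma~\ref{Lem: Univerisal Cover Cohomology}.

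Next I would verify the cohomological input of that lemma, namely that $\varinjlim_i H^q(Y_i;\widetilde A)$ is finite for every $q$ and every local system $\widetilde A$ of finite abelian $l$-groups on $\{Y_i\}$. Writing $c:X_{\et}\to X^{\wedge}_{\et,l}$ for the completion map, I would chain the identifications
\[
\varinjlim_i H^q(Y_i;\widetilde A)\;=\;H^q\bigl(X^{\wedge}_{\et,l};\widetilde A\bigr)\;\cong\;H^q\bigl(X_{\et};c^{*}\widetilde A\bigr)\;\cong\;H^q_{\et}(X;A).
\]
The first equality is the definition of the cohomology of a pro-space. The second holds because $c$ is an $l$-adic weak equivalence, hence induces an isomorphism on cohomology with any local system of finite abelian $l$-groups (condition (3) in the Terminologies, cf. \cite{artin-mazur-etale-homotopy}*{Theorem 4.3}). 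For the third, $c^{*}\widetilde A$ is a local system of finite abelian $l$-groups on $X_{\et}$, so under the standard dictionary between such local systems and locally constant constructible \'etale sheaves of finite abelian $l$-groups on the connected scheme $X$ it corresponds to such a sheaf $A$, and the Artin--Mazur comparison theorem identifies $H^q(X_{\et};c^{*}\widetilde A)$ with $H^q_{\et}(X;A)$. By hypothesis $H^q_{\et}(X;A)$ is finite, so $\varinjlim_i H^q(Y_i;\widetilde A)$ is finite for all $q$ and all such $\widetilde A$.

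With the hypothesis of Lemma~\ref{Lem: Univerisal Cover Cohomology} verified for $\{Y_i\}=X^{\wedge}_{\et,l}$, Corollary~\ref{cor: pi q fin gen} yields that $\varprojlim_i\pi_q(Y_i)$ is a finitely generated $\widehat{\ZZ}_l$-module for every $q$; since $\varprojlim_i\pi_q(Y_i)=\varprojlim\pi_q(X^{\wedge}_{\et,l})$ by definition of the pro homotopy groups, this is the desired conclusion. I expect the main obstacle to be the third paragraph: one must make the passage between local coefficient systems of finite $l$-groups on the pro-space $\{Y_i\}$ and locally constant \'etale sheaves of finite abelian $l$-groups on $X$ precise and compatible, and in particular be careful that Artin--Mazur's comparison and the invariance of cohomology under $l$-profinite completion apply to the (possibly nontrivial) local systems at hand, not merely to constant coefficients. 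The rest is a direct invocation of results already established.
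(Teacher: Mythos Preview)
Your proposal is correct and follows essentially the same route as the paper: translate the \'etale-cohomological hypotheses into the cohomological hypothesis of Lemma~\ref{Lem: Univerisal Cover Cohomology} via the Artin--Mazur comparison, then invoke Corollary~\ref{cor: pi q fin gen}. The paper compresses your third paragraph into a single citation of \cite{artin-mazur-etale-homotopy}*{Corollary 10.7, 10.8}, which is precisely the dictionary you flagged as the ``main obstacle''; your more careful unpacking (including the passage through the $l$-completion map $c$) is a faithful expansion of what the paper leaves implicit.
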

\begin{proof}
    \cite{artin-mazur-etale-homotopy}*{Corollary 10.7, 10.8} identifies the \'etale fundamental group (\'etale cohomology of locally constant sheaves resp.) with the fundamental group (twisted coefficient cohomology resp.) of the \'etale homotopy type. We then conclude this corollary using Corollary \ref{cor: pi q fin gen}.
\end{proof}

\begin{corollary}
Let $X$ be a variety over a separably closed field $k$ of characteristic $p\geq 0$. Let $l$ be a prime number. Assume that $(\pi_1^{\et}X)^{\wedge}_l$ is a finite group.
Assume either of the following is true:
\begin{enumerate}
    \item[(a)] $p\neq l$;
    \item[(b)] $X$ is proper over $k$.
\end{enumerate}
Then $\varprojlim\pi_q(X^{\wedge}_{\et,l})$ is a finitely generated $\widehat{\ZZ}_l$-module for any $q\geq 2$.
\end{corollary}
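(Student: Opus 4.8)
The plan is to deduce the statement directly from Corollary \ref{cor: finite gen piq}, so the whole task reduces to checking the hypotheses of that corollary. A variety over $k$ is of finite type over a field, hence Noetherian and in particular locally Noetherian; choosing a geometric point as base point makes it pointed, and replacing $X$ by a connected component if necessary we may assume $X$ is connected. The assumption that $(\pi_1^{\et}X)^{\wedge}_l$ is finite is exactly the first hypothesis of Corollary \ref{cor: finite gen piq} (the pro-$l$ group in question being its own inverse limit). It therefore only remains to verify the second hypothesis there: that $H^q_{\et}(X;A)$ is finite for every $q$ and every locally constant sheaf $A$ of finite abelian $l$-groups on $X_{\et}$. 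Note that on the Noetherian scheme $X$ such an $A$ is automatically constructible.

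First I would treat case (b). Here $X$ is proper over $k$, so applying the finiteness theorem for higher direct images of constructible torsion sheaves under a proper morphism (proper base change, as in SGA~4, equivalently the proper case of Deligne's \emph{Th\'eor\`emes de finitude} in SGA~4$\tfrac12$) to the structure morphism $X\to\spec k$, one gets that $H^q_{\et}(X;A)$ is finite for all $q$, with no restriction on the order of $A$; in particular this also handles $l=p$. For case (a), $p\neq l$, so the order of $A$ is invertible on $X$, and the finiteness theorem for cohomology of constructible sheaves with torsion prime to the characteristic, on a scheme of finite type over a field (Deligne's \emph{Th\'eor\`emes de finitude}, SGA~4$\tfrac12$, building on Artin's results in SGA~4), again yields that $H^q_{\et}(X;A)$ is finite for every $q$. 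Properness is not needed in this case, but the hypothesis $l\neq p$ cannot be dropped: already $H^1_{\et}(\mathbb{A}^1_k;\ZZ/p)$ is infinite by Artin--Schreier theory, which explains the dichotomy in the statement.

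With the finiteness of $H^q_{\et}(X;A)$ established in both cases, Corollary \ref{cor: finite gen piq} applies verbatim and gives that $\varprojlim\pi_q(X^{\wedge}_{\et,l})$ is a finitely generated $\widehat{\ZZ}_l$-module for every $q$, which is the assertion. I do not expect any real obstacle beyond identifying and correctly invoking the two standard \'etale-cohomological finiteness inputs; the only point deserving care is keeping the two cases separate — arbitrary torsion coefficients are allowed precisely when $X$ is proper, while for non-proper $X$ one must restrict to coefficients of order prime to $p$ — and the Artin--Schreier example above shows this restriction is genuine rather than an artifact of the proof.
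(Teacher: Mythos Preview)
Your proposal is correct and follows essentially the same route as the paper: reduce to Corollary~\ref{cor: finite gen piq} and verify the required finiteness of $H^q_{\et}(X;A)$ by invoking Deligne's \emph{Th\'eor\`emes de finitude} in SGA~4$\tfrac12$ for case~(a) and the proper finiteness theorem (the paper cites Milne, you cite SGA~4/4$\tfrac12$, same result) for case~(b). Your write-up is more detailed---the remarks on constructibility and the Artin--Schreier example are nice touches---but the argument is the same.
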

\begin{proof}
To apply Corollary \ref{cor: finite gen piq}, we need to show that $H^q_{\et}(X;A)$ is finite. For (a) this follows from \cite[Th. finitude, Cor. 1.10]{deligne1977sga4.5}.
For (b), this follows from
\cite[VI, Cor. 2.8]{Milne-Etale-cohomology}.
\end{proof}

\section{Application to Algebraic Varieties}

In this section, we apply previous results to algebraic varieties. We first need to generalize the notion of mod-$l$ finite dominatedness to pro-spaces.

\begin{definition}\label{defn: pro mod l finite dom}
Let $\{Y_i\}$ be a pro space with each $Y_i$ $\pi$-$l$-finite. Assume that $\pi=\varprojlim \pi_1(Y_i)$ is a finite $l$-group. Let $Y$ be the homotopy inverse limit of $\{Y_i\}$. $\{Y_i\}$ is \textbf{pro mod-$l$ finitely dominated} if the chain complex $C_*(\widetilde{Y};\FF_l)$ of $\FF_l[\pi]$-modules is perfect, where $\widetilde{Y}$ is the universal cover of $Y$.
\end{definition}

Then the following is a direct corollary of Theorem \ref{Thm: Main Theorem 1}.

\begin{theorem}\label{Thm: Main Theorem 2}
Let $\{Y_i\}$ be a pro system of $\pi$-$l$-finite CW complexes with $\pi=\varprojlim \pi_1(Y_i)$ a finite $l$-group. Assume that $\holim_{i} Y_i$ is $l$-complete finite type. $\{Y_i\}$ is $l$-adic weak equivalent to a finite CW complex with fundamental group $\pi$ if and only if $\{Y_i\}$ is pro mod-$l$ finitely dominated and $\holim_i Y_i$ is $l$-local liftable.    
\end{theorem}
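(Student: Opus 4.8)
The plan is to reduce Theorem \ref{Thm: Main Theorem 2} to Theorem \ref{Thm: Main Theorem 1} by means of the homotopy limit $Y = \holim_i Y_i$. First I would invoke the preceding Proposition to translate the hypothesis: $\{Y_i\}$ is pro mod-$l$ finitely dominated if and only if $Y$ is mod-$l$ finitely dominated. So it remains only to show that $\{Y_i\}$ is $l$-adic weak equivalent to a finite CW complex if and only if $Y$ is. This is essentially a matter of unwinding definitions: by the characterization of $l$-adic weak equivalence recalled in the Terminologies section (condition (2)), a pro-map $\{Y_i\} \to W$ to a finite CW complex $W$ is an $l$-adic weak equivalence precisely when $\holim \widehat{(\{Y_i\})}_l \to \holim \widehat{W}_l$ is a homotopy equivalence. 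Since each $Y_i$ is already $\pi$-$l$-finite, $\widehat{(\{Y_i\})}_l \simeq \{Y_i\}$, so $\holim \widehat{(\{Y_i\})}_l \simeq Y$; thus an $l$-adic weak equivalence $\{Y_i\} \to W$ is the same data as a map $Y \to \widehat{W}_l$ that is an $l$-adic weak equivalence of spaces, i.e. $W$ is $l$-adic weak equivalent to $Y$ in the sense of Theorem \ref{Thm: Main Theorem 1}.

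Concretely, I would argue the two directions as follows. For the ``only if'' direction: if $\{Y_i\}$ is $l$-adic weak equivalent to a finite CW complex $W$, then applying $\holim \circ (-)^{\wedge}_l$ shows $Y$ is $l$-adic weak equivalent to $W$ as well; hence by Theorem \ref{Thm: Main Theorem 1} (applied with $\pi_1(W)^{\wedge}_l = \pi_1(Y)^{\wedge}_l = \pi$ finite) $W$ is mod-$l$ finitely dominated, so $Y$ is, and then by the Proposition $\{Y_i\}$ is pro mod-$l$ finitely dominated. For the ``if'' direction: if $\{Y_i\}$ is pro mod-$l$ finitely dominated, the Proposition gives that $Y$ is mod-$l$ finitely dominated; then Theorem \ref{Thm: Main Theorem 1} produces a finite CW complex $W$ that is $l$-adic weak equivalent to $Y$. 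It remains to promote this to an $l$-adic weak equivalence of pro-spaces $W \to \{Y_i\}$ (or $\{Y_i\} \to W$). For this one uses that $\{Y_i\}$, being a pro-$\pi$-$l$-finite-space, is its own $l$-profinite completion up to weak equivalence, and that a map $W \to Y = \holim_i Y_i$ corresponds (by the universal property of the pro-completion, \cite{artin-mazur-etale-homotopy}*{Theorem 4.3}, together with the fact that $\widehat{W}_l = \{W \text{ through its finite } l\text{-quotients}\}$) to a pro-map $\widehat{W}_l \to \{Y_i\}$; checking it is an $l$-adic weak equivalence reduces, via criterion (1) or (4), to the statement that $\holim$ of its $l$-completion is the homotopy equivalence $\widehat{W}_l \xrightarrow{\sim} Y$ we started with.

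The main obstacle I anticipate is the bookkeeping in the ``if'' direction needed to go back from the space-level $l$-adic weak equivalence $W \simeq_l Y$ to a genuine pro-map realizing it, i.e. verifying that the homotopy equivalence $\widehat{W}_l \to \holim_i Y_i$ is induced by an actual morphism in the pro-category $\widehat{W}_l \to \{Y_i\}$ rather than only a map to the homotopy limit. This is exactly the content of the universal property of Artin--Mazur $l$-profinite completion: a map from $W$ to the homotopy limit of a pro-$\pi$-$l$-finite-space factors uniquely through $\widehat{W}_l$, and conversely a pro-map into $\{Y_i\}$ induces a map on homotopy limits. Once this identification is in place, conditions (1)--(4) for $l$-adic weak equivalence of pro-spaces become literally the conditions checked in Theorem \ref{Thm: Main Theorem 1}, and the proof concludes.

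\begin{proof}
By the Proposition above, $\{Y_i\}$ is pro mod-$l$ finitely dominated if and only if $Y := \holim_i Y_i$ is mod-$l$ finitely dominated, and $\pi_1(Y)^{\wedge}_l \cong \varprojlim_i \pi_1(Y_i) = \pi$ is finite.

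Suppose $\{Y_i\}$ is $l$-adic weak equivalent to a finite CW complex $W$. Passing to $l$-profinite completions and taking homotopy limits, $Y = \holim_i Y_i \simeq \holim \widehat{(\{Y_i\})}_l \simeq \holim \widehat{W}_l$, so $W$ is $l$-adic weak equivalent to $Y$ as CW complexes in the sense of Theorem \ref{Thm: Main Theorem 1}. Since $\pi_1(W)^{\wedge}_l \cong \pi_1(Y)^{\wedge}_l = \pi$ is finite, that theorem gives that $W$, hence $Y$, is mod-$l$ finitely dominated, so by the Proposition $\{Y_i\}$ is pro mod-$l$ finitely dominated.

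Conversely, suppose $\{Y_i\}$ is pro mod-$l$ finitely dominated. Then $Y$ is mod-$l$ finitely dominated with $\pi_1(Y)^{\wedge}_l = \pi$ finite, so by Theorem \ref{Thm: Main Theorem 1} there is a finite CW complex $W$ and an $l$-adic weak equivalence $W \to Y$ of spaces. By the universal property of Artin--Mazur's $l$-profinite completion \cite{artin-mazur-etale-homotopy}*{Theorem 4.3}, the composite $W \to Y = \holim_i Y_i$ corresponds to a morphism of pro-spaces $\widehat{W}_l \to \{Y_i\}$, and the induced map on $l$-completed homotopy limits recovers the original homotopy equivalence $\holim \widehat{W}_l \simeq \holim \widehat{(\{Y_i\})}_l = Y$. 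Thus this pro-map satisfies condition (2) in the list of equivalent conditions for $l$-adic weak equivalence recorded in the Terminologies section, and therefore $\{Y_i\}$ is $l$-adic weak equivalent to the finite CW complex $W$.
\end{proof}
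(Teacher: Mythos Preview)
Your proposal is correct and follows exactly the route the paper intends: the paper does not give a separate proof but simply declares the theorem an ``easy corollary of Theorem \ref{Thm: Main Theorem 1}'', and the intended reduction is precisely via the preceding Proposition identifying pro mod-$l$ finite domination of $\{Y_i\}$ with mod-$l$ finite domination of $Y=\holim_i Y_i$. One minor simplification: your concern about promoting the space-level equivalence $W\simeq_l Y$ to an honest pro-map is unnecessary, because the paper's working definition of ``$l$-adic weak equivalent'' for two pro-spaces (stated in the introduction) only asks that the homotopy limits of their $l$-profinite completions be homotopy equivalent---no pro-map is required---and since $\{Y_i\}$ is already pro-$\pi$-$l$-finite, $\holim(\{Y_i\})^{\wedge}_l\simeq Y$, so the statement collapses immediately to Theorem \ref{Thm: Main Theorem 1}.
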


The followings are the applications to algebraic geometry.

\begin{definition}\label{Def: Finite Dominatedness for Schemes}
Let $X$ be a connected, pointed, locally Noetherian scheme with $\pi=(\pi^{\et}_1(X))^{\wedge}_l$ a finite $l$-group. $X$ is \textbf{mod-$l$ finitely dominated} if the $l$-profinite completion $(X_{\et})^{\wedge}_l$ of the \'etale homotopy type $X_{\et}$ is.
\end{definition}

\begin{theorem}\label{Cor: Cor 1}
With the same assumption as Definition \ref{Def: Finite Dominatedness for Schemes}, $X_{\et}$ is $l$-adic weak equivalent to a complex variety with fundamental group $\pi$ if and only if $X$ is mod-$l$ finitely dominated and $X^{\wedge}_{\et,l}$ is $l$-local liftable.
\end{theorem}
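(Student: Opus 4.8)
The plan is to move the question from schemes to pro-spaces, where it becomes a direct application of Theorem~\ref{Thm: Main Theorem 2}, and then to supply the one genuinely geometric ingredient: that a finite CW complex whose fundamental group is a finite $l$-group is $l$-adic weak equivalent to a complex variety. For the reduction, recall that $(X_{\et})^{\wedge}_l$ is represented by a pro-system $\{Y_i\}$ of $\pi$-$l$-finite CW complexes with $\varprojlim_i\pi_1(Y_i)=(\pi^{\et}_1(X))^{\wedge}_l=\pi$ a finite $l$-group, so Theorem~\ref{Thm: Main Theorem 2} and Definition~\ref{defn: pro mod l finite dom} apply to it; by Definition~\ref{Def: Finite Dominatedness for Schemes}, ``$X$ is mod-$l$ finitely dominated'' means by definition ``$(X_{\et})^{\wedge}_l$ is pro mod-$l$ finitely dominated''. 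Finally, $X_{\et}$ and $(X_{\et})^{\wedge}_l$ have the same $l$-adic weak homotopy type, and for a complex variety $Y$ the pro-space $Y_{\et}$ is weak equivalent to the profinite completion of $Y^{an}(\CC)$; so throughout we may work with $(X_{\et})^{\wedge}_l$ and with $(Y^{an}(\CC))^{\wedge}_l$.

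For the ``only if'' direction --- the one used in the lifting application --- suppose $X_{\et}$ is $l$-adic weak equivalent to $Y_{\et}$ for some complex variety $Y$. Since $Y$ is of finite type over $\CC$, the space $Y^{an}(\CC)$ has the homotopy type of a finite CW complex (triangulability of complex algebraic sets), so $(X_{\et})^{\wedge}_l$ is $l$-adic weak equivalent to a finite CW complex. By Theorem~\ref{Thm: Main Theorem 2} it is then pro mod-$l$ finitely dominated, i.e.\ $X$ is mod-$l$ finitely dominated.

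For the ``if'' direction, suppose $X$ is mod-$l$ finitely dominated. By Theorem~\ref{Thm: Main Theorem 2}, $(X_{\et})^{\wedge}_l$ is $l$-adic weak equivalent to a finite CW complex $K$, and inspecting the construction in the proof of Theorem~\ref{Thm: Main Theorem 1} (to which Theorem~\ref{Thm: Main Theorem 2} is reduced), the complex $K$ is built from a $2$-complex with fundamental group $\pi$ by attaching finitely many cells; in particular $\pi_1(K)\cong\pi$ is a genuine finite $l$-group. It therefore remains to prove: \emph{every finite CW complex $K$ with $\pi_1(K)$ a finite $l$-group is $l$-adic weak equivalent to a complex variety} --- granting this, $X_{\et}\sim_l K\sim_l Y^{an}(\CC)$ for the resulting variety $Y$ and we are done. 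Here I would begin from Serre's construction, which for each $N$ produces a smooth projective complex variety $V_N$ with $\pi_1(V_N)\cong\pi$ and $N$-connected universal cover, i.e.\ an algebraic model of any given Postnikov truncation of $B\pi$ once $N$ is large; one then has to promote this to an algebraic model of $K$ by realizing the higher Postnikov data of $K$ --- the finite groups $\pi_q(K)$ together with their $\pi$-actions and the relevant $k$-invariants --- one stage at a time by complex varieties, compatibly with the model already built, assembling the tower from fibre products, free finite quotients, and scheme-theoretic pushouts, and using that $l$-completed Eilenberg--MacLane spaces admit \'etale-homotopy models in a bounded range (for instance $K(\ZZ,2)$ is modelled by $\PP^N$).

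The step I expect to be the main obstacle is precisely this last claim. Everything else is formal once Theorem~\ref{Thm: Main Theorem 2}, Definition~\ref{Def: Finite Dominatedness for Schemes}, and the comparison $Y_{\et}\simeq (Y^{an}(\CC))^{\wedge}$ are in hand; but turning a prescribed finite $l$-type (with finite $l$-group fundamental group) into an honest complex variety up to $l$-adic weak equivalence --- in particular realizing the attaching maps and the $k$-invariants by algebraic morphisms, which forces one out of the smooth proper world --- is where the real difficulty lies, and it is the part of the argument I would expect to occupy most of the work.
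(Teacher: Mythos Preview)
Your reduction to Theorem~\ref{Thm: Main Theorem 2} is exactly what the paper does, and your ``only if'' direction is fine. The divergence is entirely in the ``if'' direction, where you are working much harder than necessary.

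The paper does not attempt to algebraically realize the Postnikov tower of $K$. Instead it invokes a classical fact from Deligne--Sullivan (\emph{Fibr\'es vectoriels complexes \`a groupe structural discret}, C.~R.~Acad.~Sci.~Paris, 1975, paragraph below the Lemma on p.~1082): \emph{every} finite CW complex is genuinely homotopy equivalent --- not merely $l$-adically weak equivalent --- to a complex algebraic variety. No hypothesis on $\pi_1$ is needed. With this in hand the proof is two lines: Theorem~\ref{Thm: Main Theorem 2} gives a finite CW complex $K$ with $X_{\et}\sim_l K$, and Deligne--Sullivan gives a complex variety $Y$ with $Y^{an}(\CC)\simeq K$.

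Your proposed alternative --- Serre's construction for an algebraic $B\pi$, then algebraic models of the higher Postnikov stages via fibre products and Eilenberg--MacLane approximations --- is the kind of program one would contemplate if no such result existed, but it is both speculative (as you acknowledge) and unnecessary. In particular, realizing the $k$-invariants by algebraic maps between successive stages is a genuine problem that you have not solved, so as written your ``if'' direction is incomplete rather than merely inefficient. The missing ingredient is precisely the Deligne--Sullivan citation.
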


\begin{proof}
By Theorem \ref{Thm: Main Theorem 2}, $X_{\et}$ is $l$-adic weak equivalent to a finite CW complex with the same fundamental group if and only if $X$ is mod-$l$ finitely dominated and $l$-local liftable. Following the construction in \cite{Deligne-Sullivan}*{paragraph below Lemma on p.~1082}, any finite CW complex is homotopy equivalent to a complex variety.
\end{proof}
Let $R$  be a discrete valuation ring with the residue field $k$ separably closed of characteristic $p>0$. Let $R\rightarrow \CC$ be a ring embedding. Let $Y$ be a smooth, proper variety over $R$.
Recall from \cite{artin-mazur-etale-homotopy}*{Corollary 12.12, Corollary 12.13} that $(Y_k)^{\wedge}_{\et,l}$ is canonically weak equivalent to $(Y_
{\CC})^{\wedge}_{\et,l}$ for any prime $l\neq p$. Proposition \ref{Lemma: a necessary condition for finiteness obstruction} and Theorem \ref{Cor: Cor 1} deduces the following, which is our main motivation for this paper.

\begin{corollary}\label{cor: last cor}
Let $X$ be a connected, proper, smooth variety over $k$. Let $l\neq p$ be a prime. Assume that $\pi=(\pi^{\et}_1(X))^{\wedge}_l$ is a finite $l$-group. 
\begin{enumerate}
    \item If $X$ has a lifting to a smooth, proper scheme $Y$ over $R$, then $X$ is mod-$l$ finitely dominated.
    \item If the $Y$ above satisfies that $\pi_1(Y^{an}_\CC)=\pi$, then  $X$ is mod-$l$ finitely dominated and $X^{\wedge}_{\et,l}$ is $l$-local liftable.
\end{enumerate}
\end{corollary}

\begin{remark}
The condition of item (2) actually deduces that $(\pi^{\et}_1(X))^{\wedge}_{l'}$ is trivial for any prime $l'\neq l,p$. 
\end{remark}

\appendix

\section{Some Algebraic Facts}

This appendix gives a more elementary proof of Remark \ref{Rem: vanishing of k-group}.
Lemma \ref{Lemma: appendix 2} in the case where $R$ is a discrete valuation ring can be directly deduced from \cite[Theorem 77.2]{curtis1966representation}.

By Nakayama's lemma, the following is easy.

\begin{lemma}\label{Lemma: Nakayama}
Let $R$ be a possibly non-commutative, local ring with the maximal $2$-sided ideal $m$. Let $D=R/m$ be the quotient division algebra. Let $M$ be a finitely generated left $R$-module. Then $\{x_i\}$ form a minimal generating set of $M$ over $R$ if and only if their representatives $\overline{x_i}$ in $M/mM$ form a basis over $D$.
\end{lemma}

\begin{lemma}\label{Lemma: appendix 2}
Let $R$ be a commutative local ring with the maximal ideal $m$. Let $\pi$ be a finite group. Let $M$ be a finitely generated projective $R[\pi]$-module. If  $\overline{M}=M/mM$ is a free $(R/m)[\pi]$-module, then $M$ is a free $R[\pi]$-module.
\end{lemma}

\begin{proof}
Since $R$ is local, $M$ is a finitely generated free $R$-module. Let $\{\overline{e_i}\}$ be a basis for $\overline{M}$ as an $R/m[\pi]$-module. Let $\pi=\{g_1,...,g_r\}$. Then $\{g_j\overline{e_i}\}$ is a basis for $\overline{M}$ as an $R/m$-module. 

Let $e_i\in M$ be a lift of $\overline{e_i}$. By Lemma \ref{Lemma: Nakayama}, $\{g_je_i\}$ is a minimal generating set of $M$ over $R$ and hence they form a basis of $M$ as an $R$-module. This shows that $M$ is a free $R[\pi]$-module.
\end{proof}

The following is a direct corollary of Lemma \ref{Lemma: appendix 2} and Fact \ref{Fact: Projective Modules are Free}.

\begin{corollary}\label{Cor:: freeness of modules}
Let $R$ be a commutative local ring. Let $\pi$ be a finite $l$-group. Assume that there exists a quotient homomorphism $R\rightarrow \FF_{l^s}$, where $\FF_{l^s}$ is the finite field of cardinality $l^s$. Then any finitely generated projective $R[\pi]$-module is free. In particular, $K_0(R[\pi])\cong \ZZ$ which is generated by the free $R[\pi]$-module of rank $1$.
\end{corollary}

\bibliographystyle{amsalpha}
\bibliography{ref}

\Addresses

\end{document}